\definecolor{dark-red}{rgb}{0.6,0.15,0.15}
\definecolor{dark-blue}{rgb}{0.15,0.15,0.55}
\definecolor{medium-blue}{rgb}{0,0,0.65}
\def\nn#1{{{\color[rgb]{.2,.5,.6} \small [[#1]]}}}
\long\def\noop#1{}
\def\semicolon{;}
\def\applytolist#1{
    \expandafter\def\csname multi#1\endcsname##1{
        \def\multiack{##1}\ifx\multiack\semicolon
            \def\next{\relax}
        \else
            \csname #1\endcsname{##1}
            \def\next{\csname multi#1\endcsname}
        \fi
        \next}
    \csname multi#1\endcsname}
\def\calc#1{\expandafter\def\csname c#1\endcsname{{\mathcal #1}}}
\def\declaremathop#1{\expandafter\DeclareMathOperator\csname #1\endcsname{#1}}
\theoremstyle{plain}
\newtheorem{prop}{Proposition}[subsection]
\newtheorem{thm}[prop]{Theorem}
\newtheorem{lem}[prop]{Lemma}
\newtheorem*{cor*}{Corollary}
\newtheorem*{defn*}{Definition}             % unnumbered definition
\numberwithin{figure}{subsection}
\numberwithin{equation}{subsection}
\let\c@prop\c@figure
\def\eqar#1{\begin{eqnarray*}#1\end{eqnarray*}}
\def\ip#1#2{\langle #1, #2 \rangle}
\def\ipp#1{\langle #1, #1 \rangle}
\newcommand*{\deq}{\mathrel{\vcenter{\baselineskip0.5ex \lineskiplimit0pt
                     \hbox{\scriptsize.}\hbox{\scriptsize.}}}%
                     =}
\def\du{\sqcup}
\def\bd{\partial}
\def\sub{\subset}
\def\setmin{\setminus}
\def\ol{\overline}
\def\eset{\phi}
\def\ot{\otimes}
\def\tunit{\mathbf{1}}
\def\gdim{\GD}
\def\trev#1{\ol{#1}}
\def\inv{^{-1}}
\def\rev#1{\ol{#1}}
\def\hb#1{{h(#1)}}
\def\z{\mathbb{Z}}
\def\c{\mathbb{C}}
\def\k{\Bbbk}
\def\cl{\c l}
\def\kfigb#1#2#3#4{
	\begin{figure}
	%\refstepcounter{kwrefcounter}
	%\labelkw{#1}
	\centering
	\includegraphics[width=#4]{#2}
	\caption{#3}
	\label{#1}
	\end{figure}
}
\newcommand{\googlebooks}[1]{(preview at \href{https://books.google.com/books?id=#1}{google books})}
\newcommand{\numdam}[1]{}
\title{A universal state sum}
\author{Kevin~Walker\\Microsoft Station Q\\kevin@canyon23.net}
\begin{document}

\maketitle

\begin{abstract}
We define a universal state sum construction which specializes to most previously known
state sums (Turaev-Viro, Dijkgraaf-Witten, Crane-Yetter, Douglas-Reutter, Witten-Reshetikhin-Turaev surgery formula, Brown-Arf).
The input data for the state sum is an $n$-category satisfying various conditions, 
including finiteness, semisimplicity and $n$-pivotality.
From this $n$-category one constructs an $n{+}1$-dimensional TQFT, and applying the TQFT
gluing rules to a handle decomposition of an $n{+}1$-manifold produces the state sum.
\end{abstract}

\tableofcontents

\section{Introduction}
\label{intro-sect}

Given the right sort of $n$-category, one can construct a fully extended $n{+}1$-dimensional TQFT.
The 0- through $n$-dimensional parts of this TQFT can be defined
without choosing any combinatorial description of manifolds.
(See Appendix \ref{s-nep}.)
The top, $n{+}1$-dimensional part of the TQFT (the path integral)
is defined in terms of a handle decomposition of an $n{+}1$ manifold.
(See Section \ref{s-np1}.)

The computation of the path integral in terms of a handle decomposition
gives an algorithm for computing the path integral.
This algorithm was described in \cite{W2006}.
It was also described in \cite{W2006} how to derive the Turaev-Viro and Crane-Yetter state sums from the algorithm.
The main new thing in this paper is transforming the algorithm into a more concise formula
which I'll call the universal state sum.

The universal state sum specializes straight-forwardly to a long list of previously known state sums
(Turaev-Viro, Dijkgraaf-Witten, Crane-Yetter, Douglas-Reutter, 
Witten-Reshetikhin-Turaev (thought of as computing a relative  Crane-Yetter invariant), Brown-Arf,
Turaev ``shadow" sum).
It also produces some new examples (e.g.\ a Turaev-Viro-like state sum for super pivotal categories).
%\nn{other new examples?}

Most papers in the state sum literature work in terms of triangulations rather than handle decompositions.
Any triangulation can be turned into a handle decomposition by thickening the cells of the 
generic cell decomposition which is Poincar\'e dual to the triangulation.
(See Figure \ref{c1}.)
\kfigb{c1}{c1}{Upper left: orange triangulation ($n=2$ case); upper right: blue dual cell decomposition; 
bottom: handle decomposition from thickened cell decomposition.}{6in}
I prefer working with handle decompositions (or equivalently, general cell decompositions), because 
(1) they are more general (the Witten-Reshetikhin-Turaev and Turaev-shadow sums cannot be described in terms of triangulations), and
(2) the combinatorial ``moves" for relating handle decompositions (handle slides and cancellations) are 
much simpler that the moves for relating triangulations (Pachner moves).

The class of manifolds on which the TQFT and state sum is defined 
can be oriented, unoriented, Spin, or $\mbox{Pin}_\pm$; ``$H$-manifolds" for short (where $H$ is $\mbox{Spin}(n+1)$ or $SO(n+1)$ or $O(n+1)$ or $\ldots$).
The only requirements are that the manifolds admit handle decompositions, and that any two handle decompositions
of a manifold are related by handle slides and handle cancellations.
Because utilize gluing manifolds with corners, it is most convenient to work with PL manifolds.
Smooth manifolds will also work, if we adopt the convention that every smooth $k$-manifold
is implicitly equipped with a germ of $n{+}1$-dimensional smooth neighborhoods.

(The Spin and Pin cases require a certain amount of fussiness to do precisely 
(see \cite{ALW} for the $2{+}\epsilon$-dimensional Spin case of this fussiness),
so the details for those cases will appear in a sequel to this paper.
This paper focuses on the oriented and unoriented cases.
The  original plan was to put all cases in a single paper,
and a few stray references to the Spin and Pin cases remain in this paper.)

The higher category arguments used in this paper are mostly string-diagram-theoretic, and 
assume that the input $n$-category satisfies strong duality conditions (strict-pivotal in the oriented $n=2$ case, ``$H$-pivotal" in general) and is semi-simple.
Any model of $H$-pivotal higher categories which supports string diagrams should suffice.
(See \ref{n-cat} for a partial list of such models.)

The inductive path integral construction of Section \ref{s-np1} is the core of the argument.
The argument presented here is very nearly the same as the one presented in \cite{W2006}.
The proof relies heavily on semisimplicity assumptions and will likely strike algebraists and category theorists
as a bit clunky.
In recent joint work with David Reutter \cite{RWnss}, the inductive construction of the path integral
has been generalized to non-semisimple contexts using less clunky techniques.
Algebraists and category theorists will likely prefer the new, more general proof.
But the older, less fancy proof presented here might appeal to more to low-dimensional topologists mainly interested 
in the semi-simple case.

This work was initiated and mostly completed in Spring 2020 at the Mathematical Sciences Research Institute, 
and I gratefully acknowledge the excellent working environment MSRI provided.

\noop{
This paper was originally intended to cover the Spin and Pin cases of $H$ and as well as the oriented and unoriented cases.
In order to avoid lengthy and distracting forays into Spin and Pin fussiness, those cases have been postponed
and will appear in a sequel paper.
A few stray references to the Spin and Pin cases remain in this paper.
}

\noop{
\bigskip
For readers inclined to skim, here's a table of notation.

\medskip

\def\kwstr{\rule[-3mm]{0mm}{9mm}}
\begin{tabular}{| l | p{5in} |}
 \hline
\kwstr $C$ & an $n$-category as described in \ref{n-cat}; $\k$-linear, $H$-pivotal, weakly complete, $\ldots$ \\ \hline
\kwstr $\k$ & ground field, sometimes assumed to be $\c$ \\ \hline
\kwstr $A(M; c)$ & skein space of the $n$-manifold $M$ with boundary condition $c$ (finite linear combinations
of $C$-string diagrams on $M$ with boundary $c$, modulo relations \\ \hline
\kwstr $Z(W)(s)$  & the path integral of the $n{+}1$-manifold $W$, evaluated on the string diagram $s$ in $\bd W$ \\ \hline
\kwstr $x$  & blah \\ \hline
\kwstr $x$  & blah \\ \hline
\end{tabular}
}

\bigskip

\noop{
\nn{Notes for intro:}
\begin{itemize}
\item Abstract: We define a universal state sum construction which specializes to most previously known
state sums (Turaev-Viro, Dijkgraaf-Witten, Crane-Yetter, Douglas-Reutter, Witten-Reshetikhin-Turaev, Brown-Arf).
\nn{and produces new examples, even in low dimensions?}
\item keep short(?)
\item comment on Vec and SVec; everything applies to SVec(?)
\item perhaps comment that this paper is sort of backwards, putting emphasis on the least interesting parts of the TQFT
\item notation summary table?  (for people inclined to skim)
\end{itemize}

\noindent
\nn{To do:}
\begin{itemize}
\item fermionic 2+1 TVish SS??  also super CY; also super DR?
\item SS for defects (at least codim-1)?
\item Turaev shadow state sum (at least mention) (?)
\item discuss lack of need for combings, orderings, etc. in TV
\item maybe remark on non-strict pivotal; expect non-strict case to work; 
need to be more careful about defining string diagrams in this case;
perhaps cf Reutter-W (in preparation) (at least for low dims)
\end{itemize}
}

\section{The state sum}
\label{ss-sect}

\subsection{The sum}
\label{ss-sum}

Let $C$ be a linear, $H$-pivotal, finite, weakly complete, 
semisimple $n$-category equipped with conjugation and a nondegenerate evaluation map, 
as defined in Subsection \ref{n-cat}.
Let $W$ be an $n{+}1$-dimensional $H$-manifold, and let $\cH$ be a cell decomposition of $W$.
From $\cH$ one can construct a handle decomposition of $W$
(by thickening the cells),
and we will use the same notation to refer to the cell decomposition 
(and its constituent cells) and the handle decomposition (and its constituent handles).
Let $\cL(\cH)$ denote the (finite) set of labelings of the $j$-cells/$j$-handles of $\cH$ by minimal
$(n{+}1{-}j)$-morphisms of $C$ (for $1\le j\le n+1$), as described in \ref{ss-labelings}.
Define
\[
	Z(W) = \sum_{\beta\in\cL(\cH)} \;\; \prod_{j=0}^{n+1}\;\;  \prod_{h\,\in\, \text{$j$-handles}} 
			\frac{\ev(\beta(\bd h))}{N(\beta(h))}.
\]
Then $Z(W)$ is independent of the choice of cell decomposition $\cH$, and in fact $Z(\cdot)$ comprises
the $n{+}1$-dimensional part (path integral) of a fully extended TQFT (see Section \ref{s-np1} and Appendix \ref{s-nep}).

\medskip

In the remainder of this subsection I'll give a brief explanation
(targeted at experts and the impatient) of the notation used in the above state sum.
Later subsections will give more details.

\medskip

This paper is to some extent agnostic as to what model of $n$-categories is used, so long as that model supports
the construction of string diagrams on $H$-manifolds.
All arguments of this paper are in terms of string diagrams, so if your favorite model of $n$-categories 
affords string diagrams,
then the arguments herein apply to that model.

$H$-pivotal (where $H$ is $SO(n)$, $O(n)$, $Spin(n)$, $Pin_\pm(n)$, etc.) means, roughly, that 
the morphisms of $C$ are equipped with an action of automorphisms of $n$-balls with an $H$ structure
(e.g. oriented balls, spin balls, etc.).
(See \cite{MWblob} and \ref{n-cat} below.)
$H$-pivotal $n$-categories have enough data satisfying enough coherence relations so that one 
can define string diagrams on $H$ manifolds.
When $n=2$ and $H = SO(2)$, $H$-pivotal is just the usual notion of (strict) pivotal tensor categories (and 2-categories).
One expects that $H$-pivotal categories correspond to (a strictified version of) $H$ homotopy fixed points in the sense of \cite{LurieCH}.

``Conjugation", as used above, means a (possibly anti-linear) isomorphism between 
the $n$-morphisms assigned to an $n$-ball B
and $n$-morphisms assigned to the orientation-reversed ball $\rev B$.
For examples arising from quantum groups, conjugation amounts to ``reversing arrows" or, alternatively,
changing labels from $\alpha$ to $\alpha^*$.
If $x$ is an $n$-morphism of shape $B$, and $\ol x$ is its conjugate (of shape $\rev B$), then $x$ and $\ol x$ can be
glued together to form a string diagram on $B \cup \rev B \cong S^n$, and this diagram can be evaluated using
the supplied evaluation map to give a scalar $\ev(x\cup \ol x)\in\k$.

The evaluation map (secretly, the path integral of the $n{+}1$-ball 
$Z(B^{n+1})$) is a map from finite linear combinations of string diagrams
on $S^n$ to the ground field $\k$.
``Nondegenerate" means the pairing defined in the previous paragraph is nondegenerate for all fixed boundary conditions
on $\bd B^n$.
(Boundary conditions can be thought of as the combined source and target of the $n$-morphism $x$.
Because of pivotality, there's not much point in distinguishing between source and target.)

For much of this paper, $\k$ can be any field.
But when we speak of simple objects and idempotents we will assume that $\k = \c$, unless specified otherwise.

Let $x$ be a $k$-morphism of $C$, with $k < n$.
Let $\id^{n-k-1}(x)$ denote the $(n{-}k{-}1)$-times iterated identity of $x$, an $(n{-}1)$-morphism of $C$.
The endomorphisms of $\id^{n-k-1}(x)$ form an algebra (commutative if $k < n-1$), 
which is semisimple because of our assumptions on $C$.
We define $x$ to be minimal if these endomorphisms are a simple algebra.
We define $C$ to be ``weakly complete" if every $k$-morphism is isomorphic to a sum of minimal $k$-morphisms (for all $k$).
If $C$ is not weakly complete, then it can be completed to a Morita equivalent $n$-category
which is weakly complete.

We define minimal $k$-morphisms $x$ and $x'$ to be equivalent if there exists a non-zero $k{+}1$-morphism
connecting $x$ to $x'$.
Note that this is a coarser equivalence relation than the usual notion of $n$-categorical equivalence.

The key property of minimal $k$-morphisms is the following:
given a string diagram $c$ on $\bd B^k$, the equivalence classes of minimal $k$-morphisms with source/target $c$ index an orthogonal
basis of $A(B^k\times S^{n-k}; c\times S^{n-k})$
(This assumes that $\k=\c$ and that we are enriched in vector spaces rather than super vector spaces.
More generally the basis is in indexed by pairs $(m, \alpha)$ where $m$ is a minimal $k$-morphism with source/target $c$
and $\alpha$ is a basis vector of the endomorphisms of $\id^{n-k-1}(m)$.)

The labelings $\cL(\cH)$ are constructed sequentially, starting with the top-dimensional cells.
The $n{+}1$-cells are labeled by (a set of representatives of the equivalence classes of) the minimal 0-morphisms of $C$.
Each $n$-cell is labeled by minimal 1-morphisms in $\mor^1(x\to y)$, where $x$ and $y$ are the labels previously assigned to
the two $n{+}1$-cells adjacent to the $n$-cell.
Each $(n{-}1)$-cell is labeled by a minimal 2-morphism of $C$ whose boundary is determined by the $n{+}1$-cells and $n$-cells
adjacent to the $(n{-}1)$-cell.
And so on.
At each stage, the labels previously chosen determine a $C$-string diagram on the linking $(n{-}k)$-sphere
of the $k$-cell, this string diagram determines a set of $(n{-}k{+}1)$-morphisms of $C$, and we choose labels
from a set of representatives of the equivalence classes of minimal morphisms in that set.

The intersection of the boundary of each handle $h$ with the underlying cell decomposition determines
an unlabeled cell complex in the $n$-sphere $\bd h$.
A labeling $\beta\in\cL(\cH)$ converts this unlabeled cell complex into a labeled string diagram, denoted $\beta(\bd h)$, 
in $S^n$.
In the state sum formula above, $\ev(\beta(\bd h))$ denotes the evaluation of this diagram.

All of the ingredients of the state sum discussed so far could be easily guessed from knowledge of the Turaev-Viro and 
Crane-Yetter state sums.
The normalization factor $N(\beta(h))$ is less obvious.
Let $x$ be a $k$-morphism of $C$.
Secretly, $N(x)$ is equal to the TQFT inner product of $x\times S^{n-k}$ with itself.
Officially, we define (inductively)
\[
	N(x) = \sum_{y} \frac{\tr_s(y)^2}{N(y)}
\]
where $y$ runs through minimal endomorphisms of $x$ and the ``sphere-trace" $\tr_s$ is defined as
\[
	\tr_s(y) = \ev(y \times S^{n-k-1} \cup (\bd y)\times B^{n-k}) .
\]
Despite the complicated-looking definition, $\tr_s(y)$ should be thought of as the simplest possible way of assigning
a number to a $k$-morphism using the evaluation map for diagrams on the $n$-sphere.
See \ref{ss-norm} for more details.
To get the induction started, we define $N(x)$ to be the square of the norm-square
of $x$ (using the inner product defined above)
when $x$ is an $n$-morphism.
In low codimension, one computes that 
\begin{itemize}
	\item for $k = n-1$, $N(x) = \dim(\End(x))$
	\item for $k = n-2$, $N(x)$ is the global dimension (sum of squares of simple objects) of the 
	tensor category of endomorphisms of $x$ (up to a scaling factor)
	\item for $k = n-3$, $N(x)$ is the sum of reciprocals of global dimensions of the constituent tensor categories
	of the 3-category $\End(x)$
\end{itemize}

\medskip

If $W$ has nonempty boundary and $x$ is a $C$-string diagram on $\bd W$ we can extend the state sum to
\[
	Z(W)(x) = \sum_{\beta\in\cL_x(\cH)} \;\; \prod_{j=0}^{n+1}\;\;  \prod_{h\,\in\, \text{$j$-handles}} 
			\frac{\ev(\beta_x(\bd h))}{N(\beta(h))}.
\]
Most of the ingredients are the same as in the closed case, except the string diagram that we evaluate on each
handle, $\beta_x(\bd h)$, depends on both the labeling $\beta$ and the given string diagram $x$.
If a handle $h$ does not intersect $\bd W$ then $\beta_x(\bd h)$ is the same as $\beta(\bd h)$ above,
but if $h$ does intersect $\bd W$ then the string diagram $\beta_x(\bd h)$ 
on $\bd h \cap \bd W$ is the restriction of $x$ to $\bd h \cap \bd W$.
The set of labelings $\beta\in\cL_x(\cH)$ of handles is constrained to be compatible with $x$ near $\bd W$.

\medskip

The remainder of Section \ref{ss-sect} gives a more detailed
description of the ingredients of the state sum.
Some readers might want to first read Section \ref{s-spcase}, which works out the state
sum for various small values of $n$.

\subsection{$n$-categories}
\label{n-cat}

%\nn{this needs a rewrite}\bigskip

This subsection lists the requirements for the input $n$-category.

As noted above, 
this paper attempts to be agnostic as to what model of $n$-categories is used.
If a model supports the construction of string diagrams on $H$-manifolds, then the constructions and proofs of this paper
should apply.

Categories (meaning $n$-categories) having the right sort of duality (in dimensions 0 through $n$) include:
\begin{itemize}
\item strict pivotal tensor categories (or more generally strict pivotal 2-categories)
\item ribbon categories (thought of as 3-categories)
\item $\pi_{\le n}(T)$ for any topological space $T$ (e.g. $T = BG$, for $G$ a finite group; 
or $T$ a space with $\pi_k(T)$ finite for all $k$)
\item disklike categories, as defined in \cite{MWblob}
\item string diagram categories, as defined in \cite{MWcompl}
\end{itemize}

We further assume that for each $k$, $0\le k\le n$, we have a ``conjugation" or ``orientation reversal" map which takes
a $k$-morphism $x$ of shape $X$ and produces a $k$-morphism $\trev x$ of shape $\trev X$.
(Here the ball $\trev X$ is best thought of as $X$ with a reversed normal bundle.)
The $k$-morphisms $x$ and $\trev x$ can be glued together to yield a string diagram on $S^k$.
The conjugation maps can be linear or anti-linear.

For categories satisfying the above conditions, we can construct a fully extended
$n{+}\epsilon$-dimensional TQFT, as explained in Appendix \ref{s-nep}, \cite{W2006} and \cite{MWblob}.
In particular, for each $n$-manifold $M$ and string diagram $c$ on $\bd M$ we have a pre-dual Hilbert space
$A(M;c)$.
This is defined to be string diagrams on $M$, restricting to $c$ on $\bd M$, modulo local relations.
For each $(n{-}1)$-manifold $Y$ and string diagram $c$ on $\bd Y$ we have a linear 1-category $A(Y;c)$.
The objects of $A(Y;c)$ are string diagrams on $Y$ (restricting to $c$ on $\bd Y$), and the morphisms
from $x$ to $y$ of $A(Y;c)$ are $A(Y\times I; \ol x \cup y)$.
See Appendix \ref{s-nep} for more details.

The conditions stated so far suffice to define the 0- through $n$-dimensional parts of an $n{+}1$-dimensional TQFT.
To get the $n{+}1$-dimensional part (Theorem \ref{pithm}), we make the following additional assumptions:
\begin{itemize}
\item The pre-dual Hilbert space $A(S^j\times B^{n-j}; c)$ is finite dimensional for all $j$ and for all
string diagrams $c$ on $\bd(S^j\times B^{n-j})$.
\item The linear 1-category $A(S^j\times B^{n-j-1}; c)$ is finite semisimple for all $j$ and for all
string diagrams $c$ on $\bd(S^j\times B^{n-j-1})$.
\end{itemize}

\noop{\nn{maybe remark that this implies for all manifolds (check first)}}

The final piece of input data for the state sum is a choice of evaluation map
\[
	\ev : A(S^n) \to \c .
\]
(Secretly, this is the path integral of $B^{n+1}$.)
We require that if an $H$-isomorphism $f: S^n\to S^n$ extends to $B^{n+1}$, then $\ev(f(x)) = \ev(x)$ for all string diagrams $x$.
For each string diagram $c$ on $S^{n-1}$, we get a pairing
\begin{eqnarray*}
	A(B^n; c)\ot A(B^n; c) & \to & \c \\
	x\ot y & \mapsto & \langle x, y\rangle \deq \ev(\ol x \cup y) .
\end{eqnarray*}
We require that the above induced pairings are nondegenerate for all $c$.
(We remark that if we further assume that the pairings are positive definite for all $c$, then the semisimplicity
requirement would be a consequence; see \cite{W2006}.)

The above assumptions suffice to extend the $n{+}\epsilon$-dimensional TQFT to a full $n{+}1$-dimensional TQFT; see \ref{pithm}.
To write the state sum in a convenient form, we further assume that the input $n$-category $C$ is ``weakly complete"
in the following sense.
Let $a$ be a $k$-morphism of $C$.
The algebra (1-category with one object) $\End(\id^{n-k-1}(a))$ is, by assumption, semisimple.
If it is simple, we say that $a$ is minimal.
If, for all $0\le k \le n-1$, all $k$-morphisms $a$ are isomorphic to a sum of minimal $k$-morphisms, we
say that $C$ is weakly complete.
If $C$ is not weakly complete (but satisfies the other conditions above, including in particular the semisimplicity
condition), then we can construct a new $n$-category $\ol C$, the ``weak completion" of $C$,
by adding a new $k$-morphism for each minimal idempotent in $\End(\id^{n-k-1}(a))$ (as above).
There is a Morita equivalence between $C$ and $\ol C$, so they lead to isomorphic TQFTs.
(Weak completion, as defined here, is a special case of the more complete form of completion discussed in \cite{MWcompl}.
The reader can find more details on completion there.
See also \cite{GJF2019}, which is similar in spirit.)

For example, a multi-fusion category $C$ is weakly complete if and only if the tensor unit is a simple object,
i.e.\ if and only if it is a fusion category.
The weak completion of a multi-fusion category is obtained by adding a new 0-morphism for each simple
summand of the tensor unit.

If we did not impose the weakly complete assumption, we could still write a state sum for the path integral, 
but it would be somewhat messier than the state sum in \ref{ss-sum}.

It bears repeating that we define minimal $k$-morphisms $a$ and $a'$ to be equivalent if there exists a non-zero $k{+}1$-morphism
connecting $a$ to $a'$.
Note that this is a coarser equivalence relation than the usual notion of $n$-categorical equivalence.

\noop{
\bigskip
\noindent \nn{to do:}
\begin{itemize}
\item remark that these are not necessarily weakest possible hypotheses
\item additive (Cauchy?) completeness
\end{itemize}
}

\subsection{Labelings}
\label{ss-labelings}

The basic idea of labelings is simple.

The labelings $\cL(\cH)$ are constructed sequentially, starting with the top-dimensional cells.
The $n{+}1$-cells are labeled by (a set of representatives of the equivalence classes of) the minimal 0-morphisms of $C$.
Each $n$-cell is labeled by minimal 1-morphisms in $\mor^1(x\to y)$, where $x$ and $y$ are the labels previously assigned to
the two $n{+}1$-cells adjacent to the $n$-cell.
Each $(n{-}1)$-cell is labeled by a minimal 2-morphism of $C$ whose boundary is determined by the $n{+}1$-cells and $n$-cells
adjacent to the $(n{-}1)$-cell.
And so on.
At each stage, the labels previously chosen determine a $C$-string diagram on the linking $(n{-}k)$-sphere
of the $k$-cell, this string diagram determines a set of $(n{-}k{+}1)$-morphisms of $C$, and we choose labels
from a set of representatives of the equivalence classes of minimal morphisms in that set.

But there are some subtleties.

To each $k$-cell $h$ we can associate a normal fiber $N_x$ (isomorphic to $B^{n+1-k}$), 
and this normal fiber has a cone-like cell decomposition.
The label assigned to $x$ should be thought of as a string diagram label for the central cone point
of this cone-like cell decomposition of $N_x$.
The other cells in the cell decomposition of $N_x$ are labeled according to the previously
chose labels of $m$-cells $y$ ($m > k$).
To do this we need to take into account an isomorphism between the normal fiber $N_y$ of $y$
and the corresponding normal fiber $P$ of a cell in the cell decomposition of $N_x$.
If we have chosen trivializations or standard models for these normal fibers,
those trivializations will not necessarily agree under the isomorphisms of normal fibers coming from the 
geometry of the cell decomposition.
For example
\begin{itemize}
\item In the $n=2$ Turaev-Viro case (see \ref{ss-tv}), a 1-cell $y$ is labeled by some $\alpha\in V_{abc}$.
Let $x$ and $x'$ be the two 0-cells adjacent to $y$, and let $P$ and $P'$ be the two normal fibers isomorphic to $N_y$
inside $N_x$ and $N_{x'}$.
If the orientation of $N_y$ agrees with that of $P$, it will disagree with the orientation of $P'$, and $P$
will be labeled by $\alpha$ while $P'$ is labeled by $\trev \alpha \in V_{c^*b^*a^*}$.
\item If $a=b=c$ in the previous example there is a further $\z/3$ rotational ambiguity in identifying $N_y$ with $P$,
and this must be taken into account.
\item In the $H$ = Spin case, the spin structure of $W$ will affect the normal fiber isomorphisms, and this is how the state
sum is sensitive to the spin structure (see \ref{ss-1d}).
Similar things are true for unoriented and pin manifolds.
\end{itemize}

In traditional approaches to the Turaev-Viro state sum, one sometimes chooses a global ordering 
of the vertices of the triangulation.
This choice of global ordering indirectly determines trivializations of normal fibers as above.

\noop{
\bigskip
\noindent \nn{to do:}
\begin{itemize}
\item role of or-rev thing ($\psi$?)
\item disklike versus traditional skeletonized; trivializations of normal fibers
\item maybe simple examples
\end{itemize}
}

\subsection{Norms}
\label{ss-norm}

First we define the ``sphere trace" $\tr_s(\cdot)$.
Let $x$ be a $k$-morphism of $C$.
As usual, we will identify $x$ with a cone-like string diagram on $B^k$.
To define $\tr_s(x)$,
we construct a string diagram on $S^n$ in the simplest way possible and then evaluate it.
Specifically, we define
\[
	\tr_s(x) \deq \ev(x \times S^{n-k} \cup (\bd x)\times B^{n-k+1}) .
\]
In words, from $x$ we can form $x \times S^{n-k}$, a string diagram on $B^k\times S^{n-k}$.
From $\bd x$, a string diagram on $S^{k-1}$, we can form $(\bd x)\times B^{n-k+1}$, a
string diagram on $S^{k-1}\times B^{n-k+1}$.
These two string diagrams can be glued together to construct a string diagram on $S^n$.

For small $n$, we have:
\begin{itemize}
\item $n=1, k=1$.  This is the evaluation (ordinary trace) of $\ol x x$, $\ev(\ol x x)$.
\item $n=1, k=0$.  This is the evaluation of $S^1$, labeled by $x$; the ordinary trace of $\id_x$.
\item $n=2, k=2$.  This is $\ev(\ol x \cup x)$, the evaluation of a ``doubled" version of $x$ (see Figure \ref{xxxx}).
\item $n=2, k=1$.  In a fusion category with standard evaluation, this is the quantum dimension $d_x$.
More generally, for a 2-category, and for $x: p\to q$, this is the right quantum dimension of $x$ times the evaluation of $S^2$ labeled by $p$, or the left quantum dimension of $x$ times the evaluation of $S^2$ labeled by $q$.
\item $n=2, k=0$.  The is the evaluation of the ``empty" string diagram on $S^2$, colored by $x$.
\item $n=3, k=3$.  If the boundary of $x$ is a graph $\Gamma$ in $S^2$ 
(for example, one popular choice of $\Gamma$ is a tetrahedron),
then $\tr_s(X)$ is the evaluation of the double cone on $\Gamma$ (a 2-complex embedded in $S^3$),
with appropriate labeling.
The labels of the two cone points are $x$ and $\ol x$.
\item $n=3, k=2$.  If the boundary of $x$ is a string diagram on $S^1$ with $j$ points labeled by
1-morphisms $y_1,\ldots, y_j$, then $tr_s(x)$ is the evaluation of a 2-complex (in $S^3$) built out of one circle and $j$ disks.
The disk labels are $y_1,\ldots, y_j$.
\item $n=3, k=2$.  If $x: p\to q$, then $tr_s(x)$ is the evaluation of a 2-sphere in $S^3$.
The 2-sphere is labeled by $x$ and the two adjacent 3-cells are labeled by $p$ and $q$.
\item $n=3, k=0$.  The is the evaluation of the ``empty" string diagram on $S^3$, colored by $x$.
\end{itemize}

\medskip

As stated above, the norm $N(x)$ of a $k$-morphism $x$ of $C$ is defined to be
\[
	N(x) \deq \sum_{y} \frac{\tr_s(y)^2}{N(y)}
\]
where $y$ runs through a (finite) set of of representatives of equivalence classes of minimal
endomorphisms of $x$.

Because of the $N(y)$ in the denominator of the right hand side, this is an inductive definition.  To get the induction
started we define $N(x) = \tr_s(x)^2$ 
when $x$ is an $n$-morphism.

As we will see in \ref{ss-pi2ss}, the above definition of $N(x)$ is simply the computation of the inner product
$\langle x\times S^{n-k},  x\times S^{n-k} \rangle$ in the TQFT built out of $C$.

For small $n$ it is straightforward to compute the norm:
\begin{itemize}
\item $n=1, k=0$.  In this case $N(x)$ is the dimension of the endomorphism algebra of $x$.
If $x$ is minimal and the enriching category is Vec, then $N(x)=1$.
If the enriching category is super vector spaces, then $N(x)=1$ for ordinary simple objects and $N(x)=2$ for Majorana
simple objects.
\item $n=2, k=1$.  This is again the dimension of the endomorphism algebra of $x$.
\item $n=2, k=0$.  For $C$ a fusion category, $x$ the standard 0-morphism, and the standard evaluation, we have 
$N(x) = \gdim(C) = \sum_y d^2_y$.
(The sum is over simple objects $y$.)
If $C$ is a super fusion category, we have
\[
	N(x) = \gdim(C) = \sum_y \frac{d^2_y}{N(y)} .
\]
If $C$ is a general 2-category, we have $N(x) = \gdim(\End(x))\cdot\ev(\eset_x)^2$, where
$\End(x)$ denotes the endomorphism tensor category of $x$ and $\eset_x$ is the empty string diagram on $S^2$ with label $x$.
\item $n=3, k=2$.  This is again the dimension of the endomorphism algebra of $x$.
\item $n=3, k=1$.  This is $\gdim(\End(x))\cdot\ev(S^2_x)^2$, where $\End(x)$ is the endomorphism tensor category of $x$
and $S^2_x$ is the string diagram in $S^3$ consiting of an embedded 2-sphere labeled by $x$ (and two 3-cells labeled by the
domain and range of $x$). 
\item $n=3, k=0$.  In this case we have
\[
	N(x) = \sum_y \frac{1}{\gdim(y)} 
\]
where the sum is over minimal endomorphisms of $x$.
\end{itemize}

Note that when $n-k$ is odd, $N(x)$ does not change when one scales the evaluation function, and when $n-k$ is even
rescaling the evaluation function by $\lambda$ multiplies $N(x)$ by $\lambda^2$.
This is because the Euler characteristic of $S^{n-k}$ is zero [two] when $n-k$ is odd [even].

\section{Special cases}
\label{s-spcase}

The discussion of most of the special cases below follows the same pattern:
\begin{itemize}
\item Describe the labelings $\beta\in\cL(\cH)$ of the handles/cells.
\item Compute the handle-boundary evaluations $\ev(\beta(\bd h))$.
\item Compute the norms $N(\beta(h))$ of the handle labels.
\item Plug the above information into the universal state sum formula and recognize the result as a familiar, 
previously known state sum.
(In some cases we will have a new state sum.)
\end{itemize}

\subsection{Turaev-Viro}
\label{ss-tv}

Let $n=2$ and $H=SO(2)$.
We will initially assume that our pivotal 2-category is a pivotal fusion category
(only one 0-morphism, which is assumed to be minimal).
%At the end of this subsection we will briefly discuss more general inputs.

Let's first assume that our cell decomposition is generic (i.e.\ dual to a triangulation); three 2-cells adjacent to each
1-cell, four 1-cells and six 2-cells adjacent to each 0-cell (in a tetrahedral pattern).

The labelings $\beta\in\cL(\cH)$ in this case are as follows.
\begin{itemize}
\item 3-cells are labeled by minimal 0-morphisms of $C$, of which there is only one
(denoted $*$), because $C$ is a fusion category.
\item 2-cells are labeled by minimal 1-endomorphisms of the unique minimal 0-morphism $*$, which
are just the simple objects of the fusion category.
\item 1-cells are labeled by an orthogonal basis of $\mor(\tunit \to a\ot b\ot c)$, where $a$, $b$ and $c$ are
the labels (or duals thereof) assigned to the 2-cells adjacent to the 1-cell.
(Whether or not we use dual labels depends on a choice of orientation of the 2-cell relative to the 1-cell.)
\noop{\nn{discuss this further below or elsewhere?}}
\end{itemize}

\kfigb{tv-fig}{a1b}{Handle evaluations for the Turaev-Viro state sum}{6in}

The handle-boundary evaluations $\ev(\beta(\bd h))$ are as follows. 
(See Figure \ref{tv-fig}.)
\begin{itemize}
\item The boundary of a 3-handles is a 2-sphere labeled by $*$ -- in other words, the empty picture.
This evaluates to some scalar $\lambda\in\c$.
\item The boundary of a 2-handle $h$ is a 2-sphere with a single loop labeled by the simple object $\beta(h)$.
The evaluation is the quantum dimension (loop value) $d_{\beta(h)}$ times $\lambda$.
\item The boundary of a 1-handle $h$ is a ``theta" graph with three edges (labeled with 2-handle 
labels $a$, $b$ and $c$) and two vertices labeled by $\alpha = \beta(h)$ and $\ol{\alpha}$.
This is the usual ``theta" symbol of the fusion category, $\lambda \Theta_{abc\alpha}$.
\item The boundary of a 0-handle is a tetrahedral graph with labels coming from the six adjacent 
2-handles and four adjacent 1-handles.
The evaluation, up to normalization by theta symbols, is equal to a $6j$-symbol symbol of the fusion category, times $\lambda$.
We will denote it as $\lambda\ev(\text{Tet})$.
\end{itemize}

The norms of morphisms are as follows.
\begin{itemize}
\item The norm of a 1-handle label $\alpha$ is $(\ev(\alpha \cup \ol\alpha)^2) = (\lambda\Theta_{abc\alpha})^2$.
\item The norm of a 2-handle label (simple object) is 1.
\item The norm of a 3-handle label is the $\lambda^2$-scaled global dimension $\lambda^2\cdot\gdim(C)$.
\end{itemize}

Putting this all together, we have, for a closed 3-manifold $W$, 
\begin{eqnarray*}
Z(W) &= & 
			\sum_{\beta\in\cL(\cH)} 
			\prod_{h_3} \frac{\ev(\beta(\bd h_3))}{N(\beta(h_3))}
			\prod_{h_2} \frac{\ev(\beta(\bd h_2))}{N(\beta(h_2))}
			\prod_{h_1} \frac{\ev(\beta(\bd h_1))}{N(\beta(h_1))}
			\prod_{h_0} \ev(\beta(\bd h_0))
			\\
&= &
			\sum_{\beta\in\cL(\cH)} 
			\prod_{h_3} \frac{\lambda}{\lambda^2\cdot\gdim(C)}
			\prod_{h_2} \frac{\lambda d_{\beta(h_2)}}{1}
			\prod_{h_1} \frac{\lambda \Theta_{abc\alpha}}{\lambda^2 \Theta_{abc\alpha}^2}
			\prod_{h_0} \lambda\ev(\text{Tet})
			\\
&=&			\lambda^{\chi(W)} \sum_{\beta\in\cL(\cH)} 
			\prod_{h_3} \frac{1}{\gdim(C)}
			\prod_{h_2} d_{\beta(h_2)}
			\prod_{h_1} \frac{1}{\Theta_{abc\alpha}}
			\prod_{h_0} \ev(\text{Tet})
\end{eqnarray*}
(The products are indexed by all $i$-handles $h_i$, $0\le i \le 3$.)
For closed 3-manifolds, the Euler characteristic $\chi(M)$ is zero, so there is no dependence on $\lambda$.
This is very close to the usual Turaev-Viro-Barrett-Westbury state sum \cite{TV92,BW99}
(for the triangulation dual to our generic cell decomposition).
One minor difference is that we do not require any ordering of the vertices of the dual triangulation.
Instead, the $\ev(\text{Tet})$ factors assigned to 0-handles will use potentially non-standard versions
of labeled tetrahedral graphs, depending on the choices of standardization made on adjacent 1- and 2-handles.

The state sum works equally well for general, non-generic cell decompositions.
In the general case, the factors associated to 3- and 2-handles are unchanged (because the linking spheres
of these low-codimension cells are the same as in the generic case).
For 1-handles, we replace the evaluation of a theta graph with the evaluation of mutant theta graph
(denoted $\Theta^m$), with two vertices
and an edge for each adjacent 2-handle.
For 0-handles, the tetrahedron is replaced by the graph on the linking 2-sphere (determined by adjacent 1- and 2-cells),
which could be arbitrarily complicated.
The labelings of a 1-handle are taken from an orthogonal basis of
$\mor(\tunit \to a_1\ot a_2\ot\cdots\ot a_k)$, where $k$ is the number of 2-handles adjacent to the 1-handle.
In summary, the state sum for an arbitrary cell decomposition has the form
\[
	Z(W) = \lambda^{\chi(M)} \sum_{\beta\in\cL(\cH)} 
			\prod_{h_3} \frac{1}{\gdim(C)}
			\prod_{h_2} d_{\beta(h_2)}
			\prod_{h_1} \frac{1}{\ev(\Theta^m(\bd h_1)}
			\prod_{h_0} \ev(\text{Link}(h_0))
\]

\noop{If $W$ has boundary, we can fix a $C$-labeled graph on the boundary
\nn{ ...  refer to general discussion above}

\nn{to do: more general input 2-cat; planar alg case??}
}

\subsection{Crane-Yetter}
\label{ss-cy}

Let $n=3$ and $H=SO(3)$, and assume that the 0- and 1-morphisms of $C$ are trivial.
Then $C$ is a premodular category.
%\nn{check, or just explicitly assume premodular}
%\nn{indecomposable assumption}

Let's first assume that the cell decomposition is generic (dual to a triangulation);
three 3-cells adjacent to each 2-cell, four 2-cells and six 3-cells adjacent to each 1-cell (in a tetrahedral pattern),
five 1-cells, ten 2-cells, and ten 3-cells adjacent to each 0-cell in a 4-simplex pattern.

The labelings $\beta\in\cL(\cH)$ in this case are as follows. \noop{\nn{should do figures}}
\begin{itemize}
\item 4-cells are labeled by minimal 0-morphisms of $C$, of which there is only one,
denoted $*_0$.
\item 3-cells are labeled by minimal endomorphisms of the unique minimal 0-morphism $*_0$.
There is only one possibility, denoted $*_1$.
\item 2-cells are labeled by minimal endomorphisms of $*_1$.
There are just the simple objects of the premodular category $C$.
\item 1-cells are labeled by an orthogonal basis of 
$\mor(\tunit \to a_1\ot a_2\ot a_3\ot a_4)$, where the $a_i$ are
the labels (or duals thereof) assigned to the 2-cells adjacent to the 1-cell.
(Whether or not we use dual labels depends on a choice of orientation of the 2-cell relative to the 1-cell.)
We can think of the $a_i$ as labeling the four corners of a tetrahedron in the linking 2-sphere of the 1-cell.
If one wanted more similarity
to the original Crane-Yetter state sum, 
one could resolve the 4-valent vertex into two 3-valent vertices ($V_{a_1a_2a_3a_4} \cong \bigoplus_x V_{a_1a_2x}\ot V_{x^*a_3a_4}$)
in order to write this basis in terms of more familiar data for the premodular category $C$, 
but we do not choose to do so.
\end{itemize}

The handle-boundary evaluations $\ev(\beta(\bd h))$ are as follows.
(See Figure \ref{cy-fig}.)

\kfigb{cy-fig}{a2b}{Handle evaluations for the Crane-Yetter state sum}{5.5in}

\begin{itemize}
\item The boundary of a 4-handle is a 3-sphere labeled by $*_0$ -- in other words, the empty picture.
This empty string diagram evaluates to some scalar $\ev(\eset) = \lambda\in\c$.
\item The boundary of a 3-handle is again decorated by an empty string diagram (a 2-sphere in $S^3$ labeled by $*_1$, 
with two adjacent 3-balls labeled by $*_0$), 
and so evaluates to $\lambda$.
\item The boundary of a 2-handle $h$ is a 3-sphere with a single loop labeled by the simple object $\beta(h)$.
The evaluation is the quantum dimension (loop value) $d_{\beta(h)}$ times $\lambda$.
\item The boundary of a 1-handle $h$ is a ``mutant theta" graph with four edges (labeled with 2-handle 
labels $a_1,\ldots,a_4$) and two vertices labeled by $\alpha = \beta(h)$ and $\ol{\alpha}$.
(If we resolve the two 4-valent vertices, this mutant theta graph can be evaluated in terms of tetrahedral graphs.)
\item The boundary of a 0-handle is the 1-skeleton of a 4-simplex embedded in $S^3$.
There are ten simple object labels on the edges and five 4-valent vertex labels on the the vertices.
\end{itemize}

The norms of morphisms are as follows.
\begin{itemize}
\item The norm of a 1-handle label $\alpha$ is $\ev(\alpha \cup \ol\alpha)^2 = (\lambda\Theta^m_\alpha)^2$,
where $\Theta^m_\alpha$ denotes a ``mutant" theta symbol.
\item The norm of a 2-handle label (simple object) is 1
(assuming we are enriched in Vec and not SVec).
\item The norm of a 3-handle label is the $\lambda^2$-scaled global dimension $\lambda^2\gdim(C)$.
\item The norm of a 4-handle label is the reciprocal of the global dimension $1/\gdim(C)$.
\end{itemize}

Putting this all together, we have, for a closed 4-manifold $W$, 
\begin{eqnarray*}
Z(W) &= & 
			\sum_{\beta\in\cL(\cH)} 
			\prod_{h_4} \frac{\ev(\beta(\bd h_4))}{N(\beta(h_4))}
			\prod_{h_3} \frac{\ev(\beta(\bd h_3))}{N(\beta(h_3))}
			\prod_{h_2} \frac{\ev(\beta(\bd h_2))}{N(\beta(h_2))}
			\prod_{h_1} \frac{\ev(\beta(\bd h_1))}{N(\beta(h_1))}
			\prod_{h_0} \ev(\beta(\bd h_0))
			\\
&= &
			\sum_{\beta\in\cL(\cH)} 
			\prod_{h_4} \frac{\lambda}{1/\gdim(C)}
			\prod_{h_3} \frac{\lambda}{\lambda^2 \gdim(C)}
			\prod_{h_2} \frac{\lambda d_{\beta(h_2)}}{1}
			\prod_{h_1} \frac{\lambda \Theta^m_{\alpha}}{(\lambda \Theta^m_{\alpha})^2}
			\prod_{h_0} \lambda\ev(\text{4-simplex})
			\\
&=&			\lambda^{\chi(W)} \sum_{\beta\in\cL(\cH)} 
			\prod_{h_4} \gdim(C)
			\prod_{h_3} \frac{1}{\gdim(C)}
			\prod_{h_2} d_{\beta(h_2)}
			\prod_{h_1} \frac{1}{\Theta^m_{\alpha}}
			\prod_{h_0} \ev(\text{4-simplex})
\end{eqnarray*}

This is equivalent to the state sum in \cite{CYK97} if we resolve 4-valent vertices into pairs of 3-valent vertices
and set $\lambda=1$.

For a general (not dual to a triangulation) cell decomposition, we have the following modifications:
\begin{itemize}
\item The link of a 1-cell can be a general cell decomposition of $S^2$ (rather than a tetrahedron).
So the resulting mutant theta graphs will have as many edges as there are vertices in this cell decomposition.
\item The link of a 0-cell is a general graph in $S^3$ (rather than a 4-simplex.
\end{itemize}
So for general cell decompositions we have
\[
Z(W) = 
			\lambda^{\chi(W)} \sum_{\beta\in\cL(\cH)} 
			\prod_{h_4} \gdim(C)
			\prod_{h_3} \frac{1}{\gdim(C)}
			\prod_{h_2} d_{\beta(h_2)}
			\prod_{h_1} \frac{1}{\Theta^m_{\alpha}}
			\prod_{h_0} \ev(\text{Link}(h_0)) .
\]

\medskip

As explained in \cite{W2006}, if $C$ is modular and 
we choose $\lambda = \pm (\gdim(C))^{-1/2}$, then the resulting $3{+}1$-dimensional TQFT
is bordism invariant, and the Witten-Reshetikhin-Turaev theory for $C$ can be recovered by considering manifolds with 
boundary.
We have, for $X$ a $k$-manifold and $k=0\ldots3$,
\[
	Z_{WRT}(X) = Z_{3{+}1}(\bd^{-1}(X))(\eset),
\]
where $\eset$ denotes the empty string diagram on $X$.
This is defined only when $\bd^{-1}(X)$ exists; $X$ can be any 1-, 2- or 3-manifold, but if $X$ is an oriented 0-manifold, it must
have the same number of positive and negative points.
Manifolds $X$ must equipped with extra structure to determine $\bd^{-1}(X)$ up to (iterated) bordism.

\subsection{WRT surgery formula}
\label{ss-wrtsf}

Let's again consider the case $n=3$, $H=SO(3)$, and the 0- and 1-morphisms of $C$ are trivial
(premodular category).

Let $W$ be a 4-manifold (with boundary) built out of a single 0-handle and some 2-handles, attached
to the 0-handle along a framed link $L\subset S^3$.
The boundary of $W$ is the 3-manifold obtained from Dehn Surgery on $L$.

We will choose the empty string diagram as a boundary condition on $W$.

The set of labelings $\beta(\cH)$ are assignments of a simple object $\beta(h)$ to each 2-handle $h$.

When $h$ is a 2-handle, we have
\[
	\ev(\beta(\bd h)) = \lambda d_{\beta(h)} ,
\]
where $\lambda$ is, as usual, the evaluation of the empty string diagram.

When $h$ is the 0-handle, we have
\[
	\ev(\beta(\bd h)) = \ev(\beta(L)) = \lambda J(\beta(L)) ,
\]
where $\beta(L)$ denotes the string diagram obtained by labeling each component of $L$ according to $\beta$,
and $J$ is the generalized Jones polynomial (normalized so that $J$ of the empty link is 1).

As before, then norms of the 2-handle labels are all 1.

Putting this all together, we have
\[
	Z(W)(\eset) = \sum_{\beta\in\cL(\cH)} 
			\prod_{h_2} \lambda d_{\beta(h_2)}
			\prod_{h_0} \lambda J(\beta(L)) .
\]
If $C$ is a modular category and we choose $\lambda^2 = \gdim(C)^{-1}$, then this is the usual
Witten-Reshetikhin-Turaev surgery formula for $Z_{WRT}(\bd W)$
\cite{Wit_jones,RT1991}.

\subsection{Douglas-Reutter}
\label{ss-dr}

Let n = 3 and H = SO(3).
Assume that the 0-morphisms of $C$ are trivial.
This is the monoidal 2-category case considered in \cite{DRf2c}.

As before, we'll first consider the case of a generic cell decomposition, then a general cell decomposition.

For a generic cell decomposition (see \ref{ss-cy}), the labelings $\beta\in\cL(\cH)$ are as follows. 
\noop{\nn{should do figures}}
\begin{itemize}
\item 4-cells are labeled by minimal 0-morphisms of $C$, of which there is only one,
denoted $*$.
\item 3-cells are labeled by minimal endomorphisms of the unique minimal 0-morphism $*$.
$\End(*)$ is a 2-category, and because of our weak completeness assumption, it is a sum of indecomposable 2-categories.
There is one equivalence class of minimal endomorphism of $*$ per summand.
\item Each 2-cell is adjacent to three 3-cells, with labels $a_1$, $a_2$ and $a_3$.
There is a 1-category of morphisms from $\id_*$ to $a_1 \ot a_2 \ot a_3$, which we will denote 
$\mor(\id_* \to a_1 \ot a_2 \ot a_3)$.
If the 3-cells and 2-cells are oriented compatibly, then 
the 2-cell is labeled by simple objects of $\mor(\id_* \to a_1 \ot a_2 \ot a_3)$.
In general, we take simple objects of
$\mor(\id_* \to a_1 \ot a_2^* \ot a_3)$, $\mor(\id_* \to a_1^* \ot a_2^* \ot a_3)$, etc., 
depending on the relative orientations of 3- and 2-cells.
\item 1-cells are labeled by an orthogonal basis of 
$\mor(\beta(\text{Tet}))$, where $\beta(\text{Tet})$ denotes a labeled 
tetrahedral graph in the linking 2-sphere of the 1-cell,
and $\mor(\beta(\text{Tet}))$ denotes the vector space of 3-morphisms corresponding to this string diagram on the 2-sphere.
The tetrahedral graph has four vertices corresponding to the four adjacent 2-cells, and six
edges corresponding to the six adjacent 3-cells.
The labels (or duals thereof, depending on relative orientations) 
assigned to the adjacent 2- and 3-cells determine the labelings of the 2-complex.
\end{itemize}

The handle-boundary evaluations $\ev(\beta(\bd h))$ are as follows. 
\noop{\nn{figures ??}}
\begin{itemize}
\item The boundary of a 4-handle is a 3-sphere labeled by $*$ -- in other words, the empty picture.
This empty string diagram evaluates to some scalar $\ev(\eset) = \lambda\in\c$.
\item The boundary of a 3-handle is a 2-sphere (in $S^3$) labeled by the minimal morphism $\beta(h)$.
We will denote the evaluation by $\lambda\ev(S^2_{\beta(h)})$.
\item The boundary of a 2-handle $h$ is a ``spun-theta" 2-complex, built out of a circle and three 2-cells.
(This is a generalization of loop evaluation that appears in the Crane-Yetter state sum.)
We will denote the evaluation by $\lambda\ev(\text{spun-}\Theta)$.
\item The boundary of a 1-handle $h$ is 2-complex that can be thought of as a double cone on a tetrahedron.
It has two vertices (the two cone points), four 1-cells, and six 2-cells.
(If one ignores the 2-cell labels, then this reduces to the four-barred mutant theta graph of the Crane-Yetter invariant.)
We will denote the evaluation by $\lambda\ev(\text{DCTet})$.
\item The boundary of a 0-handle is the 2-skeleton of a 4-simplex embedded in $S^3$.
There are 
five vertices (labeled by 1-cells labels described above),
ten edges (labeled by 2-cell labels),
and ten 2-cell faces (labeled by 3-cell labels).
We will denote the evaluation by $\lambda\ev(\text{4-simplex})$.
\end{itemize}

The norms of morphisms are as follows.
\begin{itemize}
\item The norm of a 1-handle label $\alpha$ is $\ev(\alpha \cup \ol\alpha)^2 = (\lambda\ev(\text{DCTet}))^2$,
where the DCTet 2-complex has its two cone points labeled by $\alpha$ and $\ol \alpha$.
\item The norm of a 2-handle label (simple object of $\mor(\id_* \to a_1 \ot a_2 \ot a_3)$) is 1
(assuming we are enriched in Vec and not sVec).
\item The norm of a 3-handle label $m$ (a minimal 1-morphism of $C$) is the $\lambda^2$-scaled global dimension 
of the tensor category $\End(m)$, time the evaluation of a 2-sphere labeled by $m$.
We will denote this by $\lambda^2\ev(S^2_m)^2\gdim(m)$.
\item The norm of a 4-handle label is 
\[
	\gdim_3(C) \deq \sum_m \frac{1}{\gdim(m)} ,
\]
where the sum is over equivalence classes of minimal 1-morphisms $m$.
\end{itemize}

Putting this all together, we have, for a closed 4-manifold $W$, 
\begin{eqnarray*}
Z(W) &= & 
			\sum_{\beta\in\cL(\cH)} 
			\prod_{h_4} \frac{\ev(\beta(\bd h_4))}{N(\beta(h_4))}
			\prod_{h_3} \frac{\ev(\beta(\bd h_3))}{N(\beta(h_3))}
			\prod_{h_2} \frac{\ev(\beta(\bd h_2))}{N(\beta(h_2))}
			\prod_{h_1} \frac{\ev(\beta(\bd h_1))}{N(\beta(h_1))}
			\prod_{h_0} \ev(\beta(\bd h_0))
			\\
&= &
			\sum_{\beta\in\cL(\cH)} 
			\prod_{h_4} \frac{\lambda}{\gdim_3(C)}
			\prod_{h_3} \frac{\lambda\ev(S^2_{\beta(h_3)})}{\lambda^2\ev(S^2_{\beta(h_3)})^2\gdim(\beta(h_3))}
			\prod_{h_2} \frac{\lambda\ev(\text{spun-}\Theta)}{1}
	\\ &  & \qquad\qquad\qquad\qquad		
			\prod_{h_1} \frac{\lambda\ev(\text{DCTet})}{(\lambda\ev(\text{DCTet}))^2}
			\prod_{h_0} \lambda\ev(\text{4-simplex})
			\\
&=&			\lambda^{\chi(W)} \sum_{\beta\in\cL(\cH)} 
			\prod_{h_4} \frac{1}{\gdim_3(C)}
			\prod_{h_3} \frac{1}{\ev(S^2_{\beta(h_3)}) \gdim(\beta(h_3))}
			\prod_{h_2} \ev(\text{spun-}\Theta)
	\\ &  & \qquad\qquad\qquad\qquad		
			\prod_{h_1} \frac{1}{\ev(\text{DCTet})}
			\prod_{h_0} \ev(\text{4-simplex})
\end{eqnarray*}
This is essentially the Douglas-Reutter state sum \cite{DRf2c}.
The main difference is that Douglas and Reutter use a finer equivalence relation on the minimal
1-morphisms $\beta(h_3)$ and introduce an additional normalization factor to compensate for the resulting redundancy in the sum.

For a general (not dual to a triangulation) cell decomposition, we have the following modifications:
\begin{itemize}
\item The $\ev(\text{spun-}\Theta)$ factor is replaced by the evaluation of a ``spun mutant theta"
2-complex, consisting of a circle and $k$ disks, one disk for each 3-handle adjacent to the 2-handle.
\item The DCTet 2-complex is replaced by the double cone of a general cell decomposition of the linking 2-sphere
of the 1-cell.
\item The $\ev(\text{4-simplex})$ factor is replaced by the evaluation of the 2-skeleton of a general cell
decomposition of $S^3$.
\end{itemize}

\subsection{$n=1$ cases (Euler characteristic, Brown-Arf, ...)}
\label{ss-1d}

\noop{
This subsection covers $1{+}1$-dimensional state sums for oriented, unoriented, Spin, 
$\mbox{Pin}_+$ and $\mbox{Pin}_-$ manifolds.
\nn{change if I don't do all those cases}
\nn{maybe summarize: Euler, Arf, Brown-Arf, others, ... }
}

\subsubsection{Oriented}

Let $H = SO(1)$, which is the trivial group.
Then $H$-pivotal 1-categories are just plain (linear, semisimple) 1-categories.
In this case ``weakly complete" is equivalent to being idempotent complete, and minimal 0-morphisms are simple objects.
Let $\{p_i\}_{i\in S}$ be a set of representatives of the equivalences classes of minimal objects.

\begin{itemize}
\item 2-cells are labeled $\{p_i\}$.
\item 1-cells are labeled as follows.
Let $p_i$ and $p_j$ be the labels of the two 2-cells adjacent to the 1-cell.
(If the  1-cell is part of the boundary $\bd W$, 
then one of these two labels will instead come from the specified boundary condition on $\bd W$.)
The 1-cell is labeled by an orthogonal basis of $\mor(p_i\to p_j)$.
If $i\ne j$ then this set is empty and this labeling of 2-cells does not contribute to the state sum.
If $i=j$ then this is a 1-dimensional vector space and we can, for convenience, choose $\id: p_i\to p_i$
as our basis vector.
\end{itemize}

It follows that the only labelings which contribute to the state sum are those in which
all 2-cells in the same connected component of $W$ are labeled with the same simple object $p_i$.
For simplicity we will now assume that $W$ is connected.

For each $i\in S$, let $a_i$ be the evaluation of the ``empty" string diagram on $S^1$ where all of $S^1$ 
is labeled by $p_i$.
For each fixed labeling and each 0-, 1- or 2-handle, the string diagram for the handle evaluation consists of regions
labeled by $p_i$ and (for 0- and 1-handles) points labeled by $\id:p_i\to p_i$.
It follows that each handle evaluation is equal to $a_i$ (for the $i$ determined by the labeling).

The norms of morphisms are as follows.
\begin{itemize}
\item The norm of a 1-cell label $\id:p_i\to p_i$ is $a_i^2$.
\item The norm of a 2-cell label $p_i$ is $\dim(\End(p_i)) = 1$.
\end{itemize}

Putting it all together, we have (for closed $W$)
\begin{eqnarray*}
Z(W) &=& 
			\sum_{\beta\in\cL(\cH)} 
			\prod_{h_2} \frac{\ev(\beta(\bd h_2))}{N(\beta(h_2))}
			\prod_{h_1} \frac{\ev(\beta(\bd h_1))}{N(\beta(h_1))}
			\prod_{h_0} \ev(\beta(\bd h_0))
			\\
&=&			\sum_{i\in S} 
			\prod_{h_2} \frac{a_i}{1}
			\prod_{h_1} \frac{a_i}{a_i^2}
			\prod_{h_0} a_i
			\\
&=&			\sum_{i\in S} a_i^{\chi(W)}
\end{eqnarray*}
where $\chi(W)$ is the Euler characteristic of  $W$.
(Recall that we are assuming that $W$ is connected.)

\subsubsection{Unoriented}

Now consider $H = O(1) \cong \z/2$.
An $H$-pivotal 1-category $C$ comes equipped with a linear anti-automorphism $r$, corresponding to the 
orientation-reversing map of $B^1$ to itself.

\noop{\nn{$r(e) = e'$ implies $a(e) = a(e')$}}

$r$ permutes the equivalence classes of minimal idempotents.
$C$ can be decomposed into a sum of $H$-pivotal 1-categories such that for each summand
there is a single $r$-orbit.

If this orbit has size 1 (trivial $r$ action), 
then things are very similar to the oriented case and we have
\[
	Z(W) = a^{\chi(W)},
\]
where $a$ is the evaluation of ``empty" diagram on $S^1$, as in the previous subsection.

\noop{\nn{add figure/example for following paragraph}}

If the orbit has size 2, then we have a minimal idempotent $e$ such that the minimal
idempotent $r(e)$ is orthogonal to $e$.
Note that the invariance property on the evaluation map $\ev: A(S^1) \to \k$ implies that the
evaluations of the closure of $e$ and $r(e)$ are both equal to the same $a\in \k$.
The handle-boundary evaluations for 0-handles will have a mixture of points labeled by
$e$ and $r(e)$.
(Whether the label is $e$ or $r(e)$ depends on the identifications made between normal fibers of 1-handle
and normal fibers of points in the boundary of a 0-handle.
We assume that that the normal fibers of points on the boundary of a single 0-handle are all given
trivializations which agree with some global orientation of the boundary of the 0-handle.)
If all the points on the boundary of a 0-handle are labeled by $e$ (or all are labeled by $r(e)$), then
the evaluation is $a$, as in the oriented and unoriented trivial $r$-action cases.
If there are points labeled by both $e$ and $r(e)$, then the evaluation for that 0-handle is zero (because
$e$ and $r(e)$ are orthogonal).
If $W$ is nonorientable, then for each labeling there will always be at least one 0-handle with mixed $e$ and $r(e)$ labeled points.
If $W$ is orientable, then there are precisely two labelings (corresponding to the two possible orientations of $W$)
for which all 0-handle evaluations are non-zero.
So we have
\[
	Z(W) = \begin{cases}
		2 a^{\chi(W)}\quad\quad\mbox{if $W$ is orientable} \\
		0\quad\quad\mbox{if $W$ is nonorientable}
	\end{cases}
\]

\noop{
\subsubsection{Spin}

Now consider $H = \Spin(1) \cong \z/2$, and enriched in super vector spaces.
An indecomposable $H$-pivotal 1-category is Morita equivalent to either the trivial category $\c$ or $\cl(1)$.
(Recall that as a super vector space $\cl(1) \cong \c^{1/1}$, and that the 
odd part is spanned by an element $s$ satisfying $s^2 = \id$.)
For the bar structure on $\cl(1)$, the requirement that

\bigskip
\nn{...}

}

\subsection{Dijkgraaf-Witten}
\label{ss-dw}

For simplicity I'll consider only untwisted Dijkgraaf-Witten theory, and also assume that $n \ge 2$.

Let $G$ be a finite group and consider the input $n$-category $\pi_{\le n}(BG)$, where $BG$ is the classifying space of $G$.
The $k$-morphisms of $\pi_{\le n}(BG)$ are maps of $k$-balls into $BG$ (if $k<n$) or
finite linear combinations of homotopy classes of maps of $n$-balls (with specified fixed boundary) into $BG$
(when $k=n$).

Note that $\pi_{\le n}(BG)$ is an $O(n)$-pivotal $n$-category, so our input manifold $W$ can be unoriented.

Since $n \ge 2$, $A(S^n)$ is 1-dimensional, and we choose the evaluation which sends the element of $A(S^2)$ represented by the 
trivial map $S^n\to BG$ to 1.

Since $\pi_k(BG)$ is trivial for $k \ne 1$, and $\pi_1(BG) \cong G$, it follows that labelings of cells are as follows.
\begin{itemize}
\item $n$-cells are labeled by points of $BG$, of which there is only one up to categorical equivalence.
We can take the 0-cell labels to be the standard base point * in BG.
\item $n{-}1$-cells are labeled by elements of $G$, or more specifically by choices (for each $g\in G$) of paths in $BG$,
from * to *, representing the homotopy class corresponding to $G$.
\item Each $n{-}2$-cell is adjacent to a cyclically ordered set of $n{-}1$-cells, with labels $g_1,\ldots,g_m$.
If $\prod g_i = 1$, then there is a unique (up to equivalence) 2-morphism in $\pi_{\le n}(BG)$ with the specified boundary, and the 
$n{-}2$-cell is labeled accordingly.
If $\prod g_i \ne 1$, then there is no 2-morphism with the specified boundary, and it is not possible to complete the 
partial labeling of $n$- and $n{-}1$-cells to a full labeling of all cells.
\item For $k$-cells, $k \le n-3$, there is always a unique (up to equivalence) of extending the labeling to the $k$-cell.
\end{itemize}
In summary, the set of labelings bijects with the set of maps from the $n{-}1$-cells to $G$ such that the product of group elements
adjacent to each $n{-}2$-cell is 1.

The handle-boundary evaluations are all equal to 1.

Applying the inductive definition of the norm $N(\beta(h))$ yields $N(\beta(h)) = 1$ if $h$ is a $(k<n)$-handle and
$N(\beta(h)) = |G|$ if $h$ is an $n$-handle.

Putting it all together, we have (for closed $W$)
\begin{eqnarray*}
Z(W) &=& 
			\sum_{\beta\in\cL(\cH)} \;\; \prod_{j=0}^{n+1}\;\;  \prod_{h\,\in\, \text{$j$-handles}} 
			\frac{\ev(\beta(\bd h))}{N(\beta(h))}
			\\
&=&			\sum_{\beta\in\cL(\cH)}
			\prod_{h_n} \frac{1}{|G|}
			\\
&=&			|\cL(\cH)| \prod_{h_n} \frac{1}{|G|}
\end{eqnarray*}
In other words, the the number of permissible labelings, with a factor of $1/|G|$ for each $n$-cell.
This is the usual (untwisted) Dijkgraaf-Witten state sum (for the Poincar\'e dual cell decomposition).

\subsection{Rep(G)}
\label{ss-repg}

Let $G$ again be a finite group, and let $\Rep(G)$ be its category of finite-dimensional representations.
We can think of $\Rep(G)$ as an $n$-category for any $n$.
The $k$-morphisms are trivial for $k < n-1$.
The $(n{-}1)$-morphisms are (roughly) objects of $\Rep(G)$.
The $n$-morphisms are (roughly) morphisms of $\Rep(G)$.
The corresponding string diagrams are ribbon graphs in $n$-dimensional manifolds, with ribbons labeled
by objects of $\Rep(G)$ and vertices labeled by the elements of the morphism space in $\Rep(G)$ corresponding to the 
incident ribbons.

(This construction works with $\Rep(G)$ replaced by any symmetric monoidal ribbon category, though in general one might
need to enrich in super vector spaces (instead of ordinary vector spaces) and use spin manifolds (rather than
oriented manifolds).)

The $\Rep(G)$ $n$-category is Morita equivalent to $\pi_{\le n}(BG)$ of the previous section, so the $\Rep(G)$ state
sum will compute the same invariant of closed $n{+}1$-manifolds (the untwisted Dijkgraaf-Witten invariant), but the details
of the state sum more closely resemble the Turaev-Viro and Crane-Yetter state sums.

The labelings $\beta\in\cL(\cH)$ are as follows.
\begin{itemize}
\item $k$-cells have trivial label if $k>2$.
\item 2-cells are labeled by simple objects $\rho$ in $\Rep(G)$ (i.e.\ irreducible representations of $G$).
\item 1-cells are labeled by an orthogonal basis the vector space of $G$-morphisms
$1 \to \rho_1 \ot \cdots\ot \rho_m$, where the $\rho_i$ are the simple
objects (or duals thereof, depending on relative 1-cell/2-cell orientations) assigned by the labeling to the 2-cells
incident to the 1-cell.
\end{itemize}

The handle-boundary evaluations $\ev(\beta(\bd h))$ are as follows.
(For simplicity we assume that the empty ribbon graph in $S^n$ evaluates to 1, rather than some general $\lambda\in \c$.)
\begin{itemize}
\item On the boundary of a $k$-handle, for $k>2$, we see the empty string diagram, which evaluates to 1.
\item The boundary of a 2-handle $h$ is an $n$-sphere with a single loop labeled by an irrep (simple object) $\rho = \beta(h)$.
The evaluation is the ordinary dimension (loop value) of $\rho$.
\item The boundary of a 1-handle $h$ is a ``mutant theta" graph with a labeled edge for each adjacent 2-handle and two vertices,
labeled by $\beta(h)$ and $\trev{\beta(h)}$.
\item The boundary of a 0-handle is a labeled ribbon graph which depends on the 1-skeleton of the link of the 0-handle and on the
labels assigned to the incident 1- and 2-handles.
\end{itemize}

The norms of morphisms are as follows.
\begin{itemize}
\item The norm of a 1-handle label $\alpha$ is $\ev(\alpha \cup \ol\alpha)^2 = (\Theta^m_\alpha)^2$,
where $\Theta^m_\alpha$ denotes a ``mutant" theta symbol.
\item The norm of a 2-handle label (simple object) is 1.
\item The norm of a (trivial) 3-handle label is $\sum \dim(\rho_i)^2 = |G|$.
(This is the TQFT evaluated on $S^2\times B^{n-1}$ with empty boundary conditions.)
\item The norm of a $k$-handle label, for $k \ge 3$, is $|G|^{k-1}$.
\end{itemize}

Putting it all together, we have (for closed $W$)
\begin{eqnarray*}
Z(W) &=& 
			\sum_{\beta\in\cL(\cH)} \;\; \prod_{j=0}^{n+1}\;\;  \prod_{h\,\in\, \text{$j$-handles}} 
			\frac{\ev(\beta(\bd h))}{N(\beta(h))}
			\\
&=&			
			\sum_{\beta\in\cL(\cH)} \;\; \prod_{j=3}^{n+1}\;\;  \prod_{h_j\,\in\, \text{$j$-handles}} 
			|G|^{j-1} 
			\prod_{h_2\,\in\, \text{2-handles}} \dim(\beta(h_2))
		\\ && \quad\quad\quad\quad
			\prod_{h_1\,\in\, \text{1-handles}} \frac{1}{\Theta^m_{\beta(h_1)}}
			\prod_{h_0\,\in\, \text{0-handles}} \ev(\Link(h_0))
\end{eqnarray*}

\section{Proof of invariance}
\label{s-np1}

This section contains the proof that the state sum of \ref{ss-sum} is independent of the choice of cell decomposition,
and that furthermore it is the top-dimensional part of an $n{+}1$-dimensional TQFT.
Subsection \ref{ss-pi2ss} gives some definitions and then shows that the state sum formula
follows easily from the TQFT gluing formula for $n{+}1$-manifolds.

Subsection \ref{ss-cpi} proves that the $n{+}1$-dimensional part of the TQFT exists.
Instead of working with triangulations (or their Poincar\'e dual cell decompositions), we work with arbitrary
handle decompositions.
This makes the invariance proof easier, since the ``moves" relating difference handle decompositions
(handle slides and handle cancellations) are simpler that the moves relating different triangulations
(Pachner moves).

\subsection{From path integral to state sum}
\label{ss-pi2ss}

Let $C$ be an $n$-category satisfying the hypotheses of \ref{n-cat}.
Then we can construct an $n{+}\epsilon$-dimensional TQFT, as outlined in Appendix \ref{s-nep}.
In particular, for each $n$-manifold $M$ we have the vector space $A(M)$ of string diagrams modulo local relations, and its
dual space $Z(M) = A(M)^*$.
A {\it path integral} is defined to be an element
\[
	Z(W)\in Z(\bd W) ,
\]
or equivalently, a function
\[
	Z(W) : A(\bd W) \to \k ,
\]
defined for each $n{+}1$-manifold $W$, satisfying the invariance and gluing conditions below.

{\bf Invariance.}
Let $F: W\to W'$ be an isomorphism of $n{+}1$-dimensional $H$-manifolds, and let 
$f: \bd W\to \bd W'$ denote the restriction of $F$ to boundaries.
Then
\[	
	f_*(Z(W)) = Z(W') .
\]

{\bf Gluing.}
Let $W$ be an $n{+}1$-manifold equipped with a decomposition of its boundary as 
$\bd W = M \cup \trev M \cup N$.
Let $W_{\gl}$ be the manifold obtained by gluing $M$ to $\trev M$ (see Figure \ref{gl-fig1}).
\kfigb{gl-fig1}{a3}{Dramatis personae for gluing.}{5in}
Let $c \in A(N; x, \ol x)$ and let $c_{\gl}$ denote the corresponding glued-up string diagram in $\bd W_{\gl}$.
We want to express $Z(W_{\gl})(c_{\gl})$ in terms of $Z(W)$.
The only reasonable answer is to use the ``trace" map
\[
	Z(\bd W) \to \prod_b Z(M; b) \ot Z(\trev M, \trev b) \ot Z(N; \ol b \du b)
			\to \prod_b Z(N; \trev b \du b) \supset Z(\bd W_{\gl}) .
\]
The product is over string diagrams $b$ on $\bd M$.
The components of the first map are restrictions, and the second map is a trace with respect to the pairings determined by
\begin{eqnarray*}
	A(M;b) \ot A(\trev M; \trev b) & \to & \k \\
	x \ot \trev y & \mapsto &  Z(M\times I)(x \cup \trev y)  = \ip{x}{\trev y}.
\end{eqnarray*}
These pairings are assumed to be nondegenerate, and so determine pairings on the dual spaces
\[
	Z(M; b) \ot Z(\trev M, \trev b) \to \k .
\]

We will use a more concrete form of the gluing relationship
\[
	Z(W_{gl})(c_{gl}) = \sum_e \frac{Z(W)(c\cup e\cup\ol e)}{\ip{e}{\trev e}} ,
\]
where the sum is over an orthogonal basis of $A(M; b)$.
The factor of $\ip{e}{\trev e}$ reflects that fact that we are tracing out in the dual space $Z(M; b) = A(M; b)^*$
rather than $A(M; b)$.

For notational convenience, we will also use the isomorphism (potentially anti-linear) between
$A(M; b)$ and $A(\trev M; \trev b)$ to define pairings
\begin{eqnarray*}
	A(M;b) \ot A(M; b) & \to & \k \\
	x \ot y & \mapsto &  Z(M\times I)(x \cup \trev y)  = \ip{x}{y}.
\end{eqnarray*}

\medskip

\begin{thm}\label{pithm}
Let $C$ be an $H$-pivotal $n$-category such that
\begin{itemize}
	\item $A(S^k\times B^{n-k}; c)$ is finite-dimensional for all $k$ and for all boundary conditions $c$,
	\item $A(S^k\times B^{n-k-1}; c)$  
	is a semisimple 1-category for all $k$ and for all boundary conditions $c$, and
	\item there exists $z\in Z(S^n)$ which induces nondegenerate inner products
	on $A(B^n; c)$ for all boundary conditions $c$.
\end{itemize}
Then there exists a unique path integral $Z(\cdot)$, satisfying the invariance and gluing conditions described above, such
that
\[
	Z(B^{n+1}) = z .
\]
\end{thm}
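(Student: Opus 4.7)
The plan is to define $Z(W)$ inductively from a handle decomposition $\cH$ of $W$ and then verify that the result is independent of $\cH$, satisfies the gluing axiom, and agrees with $z$ on $B^{n+1}$. Uniqueness comes essentially for free: every $n{+}1$-manifold admits a handle decomposition, and since each $k$-handle $B^k\times B^{n+1-k}$ is itself a copy of $B^{n+1}$ attached along part of its boundary, the gluing axiom together with $Z(B^{n+1})=z$ rigidly determines $Z(W)$ from these pieces. So the content is in proving existence.

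For existence, I would order the handles of $\cH$ by increasing index and attach them one at a time. At each stage the running manifold $W_0$ has a provisionally defined $Z(W_0)$, and attaching a $k$-handle amounts to gluing a copy of $B^{n+1}$ to $W_0$ along a tubular neighborhood of the attaching sphere, i.e.\ along $S^{k-1}\times B^{n+1-k}$. The gluing formula of Subsection \ref{ss-pi2ss} then forces
\[
    Z(W_1)(c) = \sum_e \frac{Z(W_0)(c_0 \cup e) \, z(c_1\cup \ol e)}{\ip{e}{\ol e}},
\]
where $e$ ranges over an orthogonal basis of $A(S^{k-1}\times B^{n+1-k}; b)$ for the appropriate boundary condition $b$ and $c_0$, $c_1$ are the pieces of $c$ inherited from $\bd W_0$ and from the new handle. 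The finiteness hypothesis on $A(S^j\times B^{n-j};c)$ makes the sum finite, the nondegeneracy hypothesis makes the denominators meaningful, and the semisimplicity hypothesis on $A(S^j\times B^{n-j-1};c)$ ensures such orthogonal bases exist and the resulting sum is basis-independent.

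The main obstacle, and where the substance of the argument lies, is showing the resulting $Z^\cH(W)$ is independent of the handle decomposition. Any two handle decompositions of an $H$-manifold are related by handle slides and handle (cancellation/creation) moves, so it suffices to check invariance under each. Cancellation invariance of a $(k,k{+}1)$-handle pair reduces, via the gluing formula, to the statement that pairing $Z(B^{n+1})$ with itself along a hemisphere and tracing with respect to the nondegenerate pairing on $A(B^n;c)$ recovers the identity endomorphism on $Z(M;b)$; this is the content of the completeness relation $\sum_e e\otimes \ol e / \ip{e}{\ol e}$ acting as the identity, which is exactly the nondegeneracy hypothesis. Handle slide invariance amounts to rewriting the composite trace corresponding to two handles attached along geometrically different but isotopic attaching regions as the same canonical trace over the merged boundary region; this reduces to associativity/commutativity of the gluing trace on the $n{+}\epsilon$-dimensional TQFT of Appendix \ref{s-nep}, applied to the collar neighborhoods in which the slide takes place. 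Semisimplicity is crucial here because it lets one rewrite the sums over orthogonal bases as canonical insertions of the identity and then reassociate freely. Once invariance under both moves is established, the gluing axiom for $Z$ itself follows because a gluing can be implemented at the level of handle decompositions by fitting them together along the gluing region, and the state sum formula of \ref{ss-sum} emerges by unfolding the iterated gluing sum in terms of minimal-morphism labelings of the cells.
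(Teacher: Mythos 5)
Your overall architecture (uniqueness from gluing, existence via iterated handle attachment, invariance under slides and cancellations, then verifying the gluing axiom) matches the paper's. But there is a genuine gap at the heart of the existence step: when you attach a $k$-handle for $k\ge 2$, the denominators $\ip{e}{\trev e}$ in your formula are inner products on $A(S^{k-1}\times B^{n-k+1}; b)$, and the theorem's hypotheses only assert nondegeneracy of the pairing on $A(B^n;c)$. You write that ``the nondegeneracy hypothesis makes the denominators meaningful,'' but for $k\ge 2$ that is precisely what must be proved, not assumed; a priori some $\ip{e}{\trev e}$ could vanish and the formula would be ill-defined. Worse, the pairing on $A(S^{k-1}\times B^{n-k+1};b)$ is itself defined via $Z(S^{k-1}\times B^{n-k+2})$, which must first be computed from a handle decomposition, so the definition of the pairing and the proof of its nondegeneracy have to be interleaved in a single induction on $k$. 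The paper does exactly this: it cuts $S^k\times B^{n-k}$ into two $n$-balls along $S^{k-1}\times B^{n-k}$, uses the assumed semisimplicity of the 1-category $A(S^{k-1}\times B^{n-k};c')$ to decompose the skein space as an orthogonal direct sum indexed by minimal idempotents (Lemma \ref{ipscalelemma}), and deduces nondegeneracy of the new pairing from nondegeneracy of the $A(B^n)$ pairings and of the previously established $A(S^{k-1}\times B^{n-k+1})$ pairings. Your invocation of semisimplicity to ``ensure orthogonal bases exist'' points at the right ingredient but does not substitute for this bootstrapping, since semisimplicity of the codimension-one 1-category does not by itself say anything about the bilinear form on the codimension-zero skein space.

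Two smaller points. For handle cancellation, the paper does not argue via a completeness relation directly; it proves an associativity-of-gluing lemma (Lemma \ref{assoc-gluing}) so the $k$- and $k{+}1$-handles can be attached in either order, and then observes that attaching them in the reversed order amounts to adding boundary collars --- and even this requires care about which handlebody structures are used to define the intermediate inner products (the ``key point'' that two handlebody structures on $(M_{12}\cup M_{23})\times I$ agree by the inductive hypothesis). And your final sentence dispatches the verification of the gluing axiom for arbitrary gluings in one clause; in the paper this is a separate induction on handle decompositions of the $n$-manifold being glued along, with a nontrivial computation matching the $\delta_{\beta\gamma}$ and $\langle f_\alpha,f_\alpha\rangle$ factors. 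These are fixable omissions, but the nondegeneracy bootstrapping is a missing idea without which the construction does not get off the ground.
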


The proof of \ref{pithm} is in Subsection \ref{ss-cpi}.

We will usually refer to $z$ in the above theorem as the evaluation map (evaluating a string 
diagram of $\bd B^{n+1}$ and producing a scalar), and write $\ev(x) = z(x)$.

\medskip

In the remainder of this subsection we will derive the state sum formula from path integral gluing relation.
As a warm-up, we will first do this in the $n=2$ case (obtaining the Turaev-Viro state sum).
Then we will do the general case.

\medskip

Let $n=2$ and, for simplicity, let $C$ be a fusion category.
We will assume that $Z(B^{n+1})(\eset) = \ev(\eset) = 1$.
As a preliminary, we need to compute inner products for the attaching regions of 1- 2- and 3-handles
($B^2\times S^0$, $B^1\times S^1$ and $B^0\times S^2$).

Inner products on $A(B^2; c)$ are given directly by the evaluation map $Z(B^3)$.
For a 3-valent 2-morphisms $\alpha$ and $\beta$, we have
\[
	\ip{\alpha}{\beta} = \Theta_{\alpha\beta} ,
\]
(see Figure \ref{a4}).
\kfigb{a4}{a4}{Inner product definition.}{5in}
For a $k$-valent 2-morphism, the inner product is the evaluation of a mutant theta graph with $k$ edges
and two vertices labeled by $\alpha$ and $\trev\beta$.
A special case is
\[
	\ipp{\id_a} = d_a,
\]
where $d_a$ denotes the loop value or quantum dimension of $a$.

A basis of $A(B^1\times S^1; \eset)$ is given by $\{l_a\}$, where $l_a$ denotes the string diagram in $A(B^1\times S^1; \eset)$
consisting of a single loop $pt\times S^1$, and $a$ runs though simple objects (up to isomorphism).
By definition, we have
\[
	\ip{l_a}{l_b} = Z(B^2\times S^1)(l_{\trev a} \cup l_b) .
\]
To evaluate $Z(B^2\times S^1)(l_{\trev a} \cup l_b)$, we use the gluing formula
\[
	Z(B^2\times S^1)(l_{\trev a} \cup l_b) = \sum_e \frac{Z(B^2\times I)(\id_{\trev a} \cup \id_b\cup e \cup \trev e)}{\ipp e} .
\]
The sum is indexed by an orthogonal basis of $A(B^2; \trev a, b)$.
If $a\ne b$, then this space is 0-dimensional and the path integral is zero.
If $a=b$, this it is 1-dimensional, spanned by $\id_a$, and both the numerator and the denominator are equal to $d_a$.
Therefore
\[
	\ip{l_a}{l_b} = \delta_{ab} .
\]

A basis of $A(S^2)$ is given by the empty string diagram $\eset_{S^2}$.
We have
\begin{align*}
\ipp{\eset_{S^2}} & = Z(S^2\times I)(\trev{\eset_{S^2}} \cup \eset_{S^2}) \\
	& = \sum_{a} \frac{Z(B^2\times I)(\trev{l_a}) \cdot Z(B^2\times I)(l_a)}{\ipp{l_a}} \\
	& = \sum_a d_a^2 \\
	& = \gdim(C) .
\end{align*}

\medskip

With the above inner product calculations out of the way, we can now compute the path integral of
a 3-manifold in terms of a handle decomposition.
Let $M$
be a closed 3-manifold with handle decomposition $\cH$.
Let $M_i$ denote the union of the 0- through $i$-handles (so $M_3 = M$ and $M_i$ is obtained from 
$M_{i-1}$ by adding $i$-handles).

We will apply the gluing relation to express $M_i$ in terms of $M_{i-1}$, for $i=3,2,1$.
We know how to compute the path integral of $M_0$, as well as each $i$-handle, 
since we started out knowing $Z(B^3)$.
Assembling these results yields the desired state sum.

Attaching a 3-handle gives a factor of $1/\gdim(C)$:
\begin{align*}
	Z(M_3) & = Z(M_2)(\eset) \cdot \prod_{h\in\cH_3}  \frac{Z(h)(\eset_{S^2})}{\ipp{\eset_{S^2}}} \\
		& = Z(M_2)(\eset) \cdot \prod_{h\in\cH_3}  \frac{1}{\gdim(C)} .
\end{align*}

Attaching a 2-handle gives a factor of $d_a$:
\begin{align*}
	Z(M_2)(\eset) & = \sum_{\beta_2} Z(M_1)(L_{\beta_2}) \cdot \prod_{h\in\cH_2}  
					\frac{Z(h)(l_{\beta_2(h)} \cup \eset_{D^2} \cup \eset_{D^2})}{\ipp{l_{\beta_2(h)}}} \\
		& = \sum_{\beta_2} Z(M_1)(L_{\beta_2})\cdot \prod_{h\in\cH_2}  d_{\beta_2(h)} .
\end{align*}
The sum is over all labelings $\beta_2$ of 2-handles $h$ by simple objects $\beta_2(h)$.
The corresponding basis element of $A(S^1\times B^1)$ (where $S^1\times B^1$ is the attaching region of the 2-handle)
is $l_{\beta_2(h)}$.
The full boundary of the 2-handle is $(S^1\times B^1) \cup D^2 \cup D^2$, and $l_{\beta_2(h)} \cup \eset_{D^2} \cup \eset_{D^2}$
is the string diagram we see in the full boundary of the 2-handle.
(The $\eset_{D^2}$ is a portion of the string diagram we chose for the boundary of a 3-handle in the previous step.)
$L_{\beta_2}$ denotes the string diagram we see in the boundary of $M_1$.
There is a loop labeled by $\beta_2(h)$ for each 2-handle $h$.

Attaching a 1-handle gives a factor of $\Theta_\alpha\inv$:
\begin{align*}
	Z(M_1)(L_{\beta_2}) & = \sum_{\beta_1} Z(M_0)(G_{\beta_2, \beta_1}) \cdot \prod_{h\in\cH_1}  
					\frac{Z(h)(\beta_1(h) \cup \trev{\beta_1(h)} \cup (\bd \beta_1(h))\times I)}{\ipp{\beta_1(h)}^2} \\
		& = \sum_{\beta_1} Z(M_0)(G_{\beta_2, \beta_1})\cdot \prod_{h\in\cH_1}  \frac{\Theta_{\beta_1(h)}}{\Theta_{\beta_1(h)}^2} \\
		& = \sum_{\beta_1} Z(M_0)(G_{\beta_2, \beta_1})\cdot \prod_{h\in\cH_1}  \Theta_{\beta_1(h)}\inv .
\end{align*}
The sum is over all labelings $\beta_1$ of 1-handles $h$ by orthogonal basis vectors
$\beta_1(h) \in \mor(\tunit\to a\ot b\ot c)$,
where $a$, $b$ and $c$ are the simple objects (or their duals, depending on relative orientations of 1- and 2-handles)
assigned by $\beta_2$ to the three 2-handles adjacent to $h$.
$G_{\beta_2, \beta_1}$ is the disjoint union of tetrahedral graphs (one for each 0-handle) labeled by $\beta_2$ and $\beta_1$
applied to the 2- and 1-handles adjacent to each 0-handle.
The boundary of each 1-handle $h$ has the string diagram $\beta_1(h) \cup \trev{\beta_1(h)} \cup (\bd \beta_1(h))\times I$.
The $(\bd \beta_1(h))\times I$ part of the string diagram is on the non-attaching boundary 
and consists of three arcs, one for each adjacent 2-handle.

Since $M_0$ is a disjoint union of 3-balls, we have
\[
	Z(M_0)(G_{\beta_2, \beta_1}) = \prod_{h\in \cH_0} \text{Tet}(h) ,
\]
where $\text{Tet}(h)$ denotes the evaluation of the tetrahedral string diagram on the boundary of $h$.
The labels of the string diagram are determined by applying $\beta_2$ and $\beta_1$ to the 2- and 1-handles adjacent to $h$.

Putting it all together, we have
\[
	Z(M) = \sum_{\beta_2} \sum_{\beta_1} 
				\prod_{h\in\cH_3}  \gdim(C)\inv
				\prod_{h\in\cH_2}  d_{\beta_2(h)}
				\prod_{h\in\cH_1}  \Theta_{\beta_1(h)}\inv 
				\prod_{h\in \cH_0} \text{Tet}(h) .
\]
As discussed in \ref{ss-tv}, this is the specialization of the general state sum formula to the case where the input 
$n$-category is a fusion category ($n=2$).

\bigskip

We now consider the general case.
This is very similar to the $n=2$ warm-up case above, with one new ingredient:
we need to determine an orthogonal basis of $A(S^{k}\times B^{n-k}; c)$, the attaching region of a $k{+}1$-handle.
As we will see, a convenient basis is given by (equivalence classes of) minimal $k$-morphisms with appropriate boundary.

Let $W$ be a closed $n{+}1$-manifold with handle decomposition $\cH$.
As before, let $W_i$ be the union of the 0- through $i$-handles.
We will consider the problem of expressing $Z(W_i)$ in terms of $Z(W_{i-1})$ for $i=n+1$, $n$ and $n-1$, then do the general case.

$W_{n+1}$ is obtained from $W_{n}$ by gluing $n{+}1$-handles along $n$-spheres, so we must determine an orthogonal basis
of $A(S^n)$.
Any string diagram on $S^n$ is isotopic to one in which all of $S^n$ outside of a small ball $D$ is labeled by some
0-morphism $p\in C^0$.
This, in turn, is equivalent to a diagram where $S^n \setmin pt$ is labeled by $p$ and the point is labeled
by an element of $\End(\id^{n-1}p)$.
If $p$ is minimal, then $\End(\id^{n-1}p)$ is 1-dimensional and the diagram is a scalar multiple of the ``empty" string diagram
$\eset_p$ where all of $S^n$ is labeled by $p$.
If $p$ is not minimal, then (by our minimality assumption), $p$ is isomorphic to $\oplus p_j$, with each $p_j$ minimal.
It follows that $\eset_p$ is equal, in $A(S^n)$, to a linear combination of the $\eset_{p_j}$.
Thus $A(S^n)$ is spanned by string diagrams of the form $\eset_p$, with $p$ minimal.

Recall that minimal $p,q\in C^0$ are defined to be equivalent if there is a nonzero element of $C^1_{pq}$, 
a non-zero 1-morphism connecting $p$ to $q$.
It follows that if $p$ is equivalent to $q$, then $\eset_p$ is a non-zero scalar multiple of $\eset_q$ in $A(S^n)$.
On the other hand, if $p$ is not equivalent to $q$, then $\eset_p$ are $\eset_q$ are orthogonal.
Applying the gluing relation to $S^n\times I$, we see that
\[
	\ip{\eset_p}{\eset_q} = Z(S^n\times I)(\trev{\eset_p}\cup\eset_q) = \sum_e 
				\frac{Z(B^n\times I)(\cdots) \cdot Z(B^n\times I)(\cdots)}{\ipp{e}} .
\]
The sum is over an orthogonal basis of $A(S^{n-1}\times I; p,q) = A(S^{n-1}\times I; p\times S^{n-1} \cup q\times S^{n-1})$, 
but if $p$ and $q$ are not equivalent, then
this vector space is 0-dimensional and the inner product $\ip{\eset_p}{\eset_q}$ is zero.

We can now apply the gluing relation to obtain
\[
	Z(W_{n+1}) = \sum_\beta Z(W_n)(L_\beta) \cdot \prod_{h\in \cH_{n+1}} \frac{\ev(\beta(h)\times S^n)}{\ipp{\beta(h)\times S^n}} .
\]
The sum is over all labelings $\beta$ of $n{+}1$-handles $h$ by minimal 0-morphisms $\beta(h)$.
$L_\beta$ denotes the string diagram on $\bd W_n$ determined by $\beta$; 
it places $\eset_{\beta(h)} = \beta(h)\times S^n$ on the boundary component
corresponding to $h$.

\medskip

$W_{n}$ is obtained from $W_{n-1}$ by gluing $n$-handles along copies of $S^{n-1}\times B^1$,
with boundary condition $p\times S^{n-1}\cup q\times S^{n-1}$, 
where $p$ and $q$ are the minimal 0-morphisms assigned by $\beta$ to the
two $n{+}1$-handles adjacent to the $n$-handle $h$ that we are considering.
So we must determine an orthogonal basis
of $A(S^{n-1}\times I; p,q)$.
Let $a \in C^1_{pq}$ be a non-zero 1-morphism connecting $p$ and $q$, and 
let $a$ also denote the corresponding string diagram on $I$.
Any string diagram on $(S^{n-1}\times I; p,q)$ is equivalent to one which 
coincides with $S^{n-1}\times  a$ outside of a small ball, for some $a$.
This, in turn, is equivalent to a diagram where $S^{n-1}\times  a$ is decorated with a point on the $a$-labeled
$S^{n-1}$ labeled 
by an element of $\End(\id^{n-2}a)$.
If $a$ is minimal, then $\End(\id^{n-2}a)$ is 1-dimensional and the diagram is a scalar multiple of $S^{n-1}\times  a$.
If $a$ is not minimal, then (by our minimality assumption), $a$ is isomorphic to $\oplus a_j$, with each $a_j$ minimal.
It follows that $S^{n-1}\times  a$ is equal, in $A(S^{n-1}\times I)$, to a linear combination of the $S^{n-1}\times a_j$.
Thus $A(S^n)$ is spanned by string diagrams of the form $S^{n-1}\times a$, with $a$ minimal.

As before, we see that $S^{n-1}\times a$ is a non-zero scalar multiple of $S^{n-1}\times b$ if $a$ and $b$ are equivalent
minimal 1-morphisms, and $S^{n-1}\times a$ and $S^{n-1}\times b$ are orthogonal if $a$ and $b$ are non-equivalent mimimal morphisms.
So for our orthogonal basis, we can take $\{S^{n-1}\times a\}$, where $a$ runs though a set of representatives of
the equivalence classes of minimal 1-morphisms in $C^1_{pq}$.

We can now apply the gluing relation to obtain
\[
	Z(W_{n})(L_\beta) = \sum_{\gamma} Z(W_{n-1})(L_{\beta\gamma}) \cdot 
			\prod_{h\in \cH_{n}} \frac{\ev(\gamma(h)\times S^{n-1} \cup \bd\gamma(h)\times B^n)}{\ipp{\gamma(h)\times S^{n-1}}} .
\]
The sum is over all labelings $\gamma$ of $n$-handles $h$ by minimal 1-morphisms $\gamma(h)$.
$L_{\beta\gamma}$ denotes the string diagram on $\bd W_{n-1}$ determined by $\beta$ and $\gamma$.

\medskip

Expressing $Z(W_{n-1})(L_{\beta\gamma})$ in terms of $Z(W_{n-2})$ proceeds similarly.
The attaching region of an $n{-}1$-handle is $S^{n-2}\times B^2$.
The boundary of the attaching region is $S^{n-2}\times S^1$, and one should think of the $S^1$ factor 
as a linking circle for $n{-}1$-cell corresponding to the $n{-}1$-handle.
The labelings $\beta$ and $\gamma$ determine a string diagram $a$ on the linking circle.
The boundary condition for the attaching region is $a\times S^{n-2}$.
We seek an orthogonal basis of $A(S^{n-2}\times B^2; S^{n-2}\times a)$.
By the same argument as before, this is given by $\{S^{n-2}\times b\}$, where $b$ runs through minimal
2-morphisms with boundary $a$.
It follows that
\[
	Z(W_{n-1})(L_{\beta\gamma}) = \sum_{\delta} Z(W_{n-2})(L_{\beta\gamma\delta}) \cdot 
			\prod_{h\in \cH_{n-1}} \frac{\ev(\delta(h)\times S^{n-2} \cup \bd\delta(h)\times B^{n-1})}
						{\ipp{\delta(h)\times S^{n-2}}} .
\]
The sum is over all labelings $\delta$ of $n{-}1$-handles $h$ by minimal 2-morphisms $\delta(h)$.
$L_{\beta\gamma\delta}$ denotes the string diagram on $\bd W_{n-2}$ determined by $\beta$ ,$\gamma$ and $\delta$.

\medskip

The general case is very similar to the above special cases.
(Because of the finite size of the greek alphabet, we will rename $\beta = \beta_{n+1}$, $\gamma = \beta_n$, and
$\delta = \beta_{n-1}$.)
The attaching region of an $i$-handle is $S^{i-1}\times B^{n-i+1}$.
The boundary of the attaching region is $S^{i-1}\times S^{n-i}$, and one should think of the $S^{n-i}$ factor 
as a linking sphere for the $i$-cell corresponding to the $i$-handle.
The labelings chosen for higher index handles determine a string diagram $a$ on the linking sphere.
The boundary condition for the attaching region is $a\times S^{n-i}$.
We want an orthogonal basis of $A(S^{i-1}\times B^{n-i+1}; S^{n-i}\times a)$.
This is given by $\{S^{n-i}\times b\}$, where $b$ runs through minimal
$(n{-}i{+}1)$-morphisms with boundary $a$.
It follows that
\[
	Z(W_{i})(L_{\beta_i}) = \sum_{\beta_{i}} Z(W_{i})(L_{\beta_{i}}) \cdot 
			\prod_{h\in \cH_{i}} \frac{\ev(\beta_{i}(h)\times S^{n-i} \cup \bd\beta_{i}(h)\times B^{n-i+1})}
						{\ipp{\beta_{i}(h)\times S^{n-i}}} .
\]
The sum is over all labelings $\beta_{i}$ of $i$-handles $h$ by minimal $n{-}i$-morphisms $\beta_{i}(h)$.
$L_{\beta_{i+1}}$ denotes the string diagram on $\bd W_{i}$ determined by previous choices of labelings, and 
$L_{\beta_{i}}$, a string diagram on $\bd W_{i-1}$, is defined similarly.

Combining the above, for $i = n+1,n,\ldots,1$, yields the general state sum formula.
We combine the various $\beta_i$ 
into a single labeling $\beta$ of handles of all indices.
We can abbreviate
\[
	\ev(\beta(\bd h)) \deq \ev(\beta_{i}(h)\times S^{n-i} \cup \bd\beta_{i}(h)\times B^{n-i+1}) .
\]
The inductive definition of $N(\cdot)$ mirrors the inductive computation of inner products
on $A(S^{i-1}\times B^{n-i+1}; S^{n-i}\times a)$.
\noop{\nn{should probably discuss this separately and in more depth; or not?}}
\[
	N(\beta(h)) = \ipp{\beta_{i}(h)\times S^{n-i}} .
\]
Putting it all together, we obtain the desired state sum formula:
\[
	Z(W) = \sum_{\beta\in\cL(\cH)} \;\; \prod_{j=0}^{n+1}\;\;  \prod_{h\,\in\, \cH_j} 
			\frac{\ev(\beta(\bd h))}{N(\beta(h))}.
\]
When $W$ has boundary, there is a similar state sum formula for $Z(W)(x)$, where $x\in A(\bd W)$.

\subsection{Constructing the path integral}
\label{ss-cpi}

This section contains the proof of Theorem \ref{pithm}.
The proof given here is essentially the same as the one given in \cite{W2006}.

As remarked in the introduction,
in recent joint work with David Reutter \cite{RWnss} the inductive construction of the path integral
given below
has been generalized to non-semisimple contexts using less clunky techniques.
Algebraists and category theorists will likely prefer the new, more general proof.
But the older, less fancy proof presented here might appeal to more to low-dimensional topologists mainly interested 
in the semi-simple case.

\medskip

Define an $(m,i)$-handlebody to be an $m$-dimensional manifold equipped with a handle decomposition with all handle
indices less than or equal to $i$.
If $X$ is an $(m,i)$-handlebody, $X\times I$ will denote the $(m+1, i)$-handlebody obtained by thickening all the handles of $X$.

We will sometimes use the same symbol to denote an $(m,i)$-handlebody and the underlying manifold.
\noop{\nn{is this still true?}}
At other times, we will emphasize the distinction between the handlebody structure and the underlying manifold
by using the notation $\hb{M}$ to denote a handlebody whose underlying manifold is $M$.
The particular choice of handlebody structure will be clear from context.

We define two $(m,i)$-handlebodies to be equivalent if they are related by a series of handle slides and handle cancellations in
which all handle indices are less than or equal to $i$.
It is a standard result that two $(m,m)$-handlebodies are equivalent if and only if the underlying manifolds are
diffeomorphic/PL-homeomorphic.
\noop{But if $i<m$ \nn{...}}

\medskip

Here's a brief outline of the proof.
There are three steps.
First we show how to compute $Z(\hb W)$, in terms of the gluings encoded in the handlebody structure of $\hb W$,
using only $Z(B^{n+1})$ and the two axioms for the path integral.
(This is very similar to \ref{ss-pi2ss}.)
This shows that if the path integral exists it is unique.
Next we show that the result of the computation of step 1 depends only on $W$ and not the choice of handlebody $\hb W$.
This shows that we have a well-defined element $Z(W)\in Z(\bd W)$ for each $n{+}1$-manifold $W$.
Finally we show that the $Z(W)$ that we have thus defined does, indeed, satisfy the path integral axioms.

The first step proceeds by induction on the handle index.
(This is still part of the outline; more details will be given below.)
\begin{itemize}
\item $Z(B^{n+1})$ determines a non-degenerate pairing on $A(B^n)$,
\item which gives a recipe for computing $Z(\hb{S^1\times B^n})$ (by attaching 1-handle to $B^{n+1}$),
\item which determines a non-degenerate pairing on $A(S^1\times B^{n-1})$,
\item which gives a recipe for computing $Z(\hb{S^2\times B^{n-1}})$ (by attaching 2-handle to $B^{n+1}$),
\item $\ldots$
\end{itemize}
The finite-dimensionality and semisimplicity assumptions are used to prove non-degenerateness.

The second step boils down to showing that the recipe for attaching cancelling $k$ and $k{+}1$-handles yields the 
identity map.
The semisimplicity assumption is again used here.

The third step is an induction on the number and indexes of handles for a handle decomposition 
of the $n$-manifold we are gluing along.
%The third step involves finding a handle decomposition of $W_1\cup_M W_2$ such that the corresponding
%computation of $Z(\hb{W_1\cup_M W_2})$ is manifestly equal to the expression the gluing axiom provides for $Z(W_1\cup_M W_2)$
%in terms of $Z(W_1)$, $Z(W_2)$, and the pairing on $A(M)$.

\medskip

Now for the details of step 1.
By assumption, $z = Z(B^{n+1})$ determines a non-degenerate pairing on $A(B^n; c)$ for all boundary conditions $c$.
\eqar{
	A(B^n; c) \ot A(\ol{B^n}; \ol c) & \to  & \k \\
	x \ot \ol y & \mapsto & Z(B^{n+1})(x \cup \ol y)
}

As in Subsection \ref{ss-pi2ss}, we can use the above pairing to compute $Z(\hb{S^1\times B^n})$,
for the standard handlebody structure consisting of one 0-handle and one 1-handle.
(In that subsection we considered a more restricted set of boundary conditions for the path integral,
but the argument is the same for general boundary conditions.)
%At this stage in the argument, the notation $Z(S^1\times B^n)$ is a bit misleading.
%What it really means the the computation of the path integral in terms of the standard handle decomposition of $S^1\times B^n$.
%We will see in step 2 of the proof that any handle decomposition of $S^1\times B^n$ yields the same answer.

It's worth emphasizing that during this step 1 of the proof, $Z(\hb W)$ means applying the gluing formula to the sequence of gluings
specified by the handlebody structure $\hb W$.
At this stage we have not yet shown that $Z(\hb W)$ is independent of the choice of $\hb W$.

We first need to show that $Z(\hb{S^1\times B^n})$ is a well-defined function from $A(\bd(S^1\times B^n))\to \k$.

\begin{lem}
The gluing relation of \ref{ss-pi2ss} defines an element of $Z(\bd W_{gl})$.
In other words, if two string diagrams $c_{gl}$ and $c'_{gl}$ represent the same element of $A(\bd W_{gl})$, then
the values computed by the gluing relation for $Z(W_{gl})(c_{gl})$ and $Z(W_{gl})(c'_{gl})$ are equal.
\end{lem}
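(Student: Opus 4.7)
\medskip

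\noindent\textbf{Proof plan.} The strategy is to show that the right-hand side of the gluing formula respects all elementary local relations in $A(\bd W_{gl})$. By linearity, it is enough to show that if $c_{gl}$ and $c'_{gl}$ differ by a single elementary local relation supported in a ball $D\subset\bd W_{gl}$, then the value produced by the formula is the same. I will split into two cases according to whether $D$ meets the image $\Sigma\subset\bd W_{gl}$ of $\bd M=\bd \trev M$ (the ``gluing locus'').

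\medskip

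\noindent\textbf{Case A: $D\cap\Sigma=\emptyset$.} Here $D$ lifts to a ball $D'\subset N$ disjoint from $\bd M\cup\bd\trev M$, and both $c_{gl}$ and $c'_{gl}$ lift to string diagrams $c,c'\in A(N;x,\ol x)$ with the \emph{same} boundary condition $x$ on $\bd M$. The elementary relation on $D$ pulls back to a local relation in $N$, and for every basis vector $e\in A(M;x)$ the diagrams $c\cup e\cup\ol e$ and $c'\cup e\cup\ol e$ represent the same element of $A(\bd W)$. Since by hypothesis $Z(W)\in A(\bd W)^{*}$, each summand of the gluing formula agrees on $c$ and $c'$, and therefore so does the whole sum.

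\medskip

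\noindent\textbf{Case B: $D$ crosses $\Sigma$.} This is the essential case. Writing $D=D_+\cup_\sigma D_-$, where $\sigma=D\cap\Sigma$ and $D_\pm$ are the halves of $D$ on the $\bd M$- and $\bd\trev M$-sides of $N$, the local relation in $A(D;c_0)$ decomposes via the $n{+}\epsilon$-dimensional gluing axiom of Appendix~\ref{s-nep}: $A(D;c_0)$ is assembled as a coend over the $1$-category $A(\sigma)$ from $A(D_+;\cdot)$ and $A(D_-;\cdot)$. Consequently the relation in $\bd W_{gl}$ that crosses $\Sigma$ corresponds precisely to a coend relation of the form $f_*(c)=({\ol f})^{*}(c)$, where $f$ is a morphism in $A(\Sigma)$ (equivalently, a string diagram on $\Sigma\times I$). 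The plan is to reinterpret the right-hand side of the gluing formula as a trace $\tr(T_c)$ of an operator $T_c\colon A(M;x)\to A(M;x)$ defined by $Z(W)(c\cup\,\cdot\,\cup\,\ol{\cdot}\,)$ together with the nondegenerate pairing coming from $Z(M\times I)$. Since a trace is basis-independent, the formula is intrinsically defined. The coend relation $f_*(c)=(\ol f)^{*}(c)$ translates, at the level of operators, into an identity $T_{f_*(c)}=B\circ A$ versus $T_{(\ol f)^{*}(c)}=A\circ B$ for suitable $A,B$ built from $c$ and $f$; cyclic invariance of trace then gives equality of the two values.

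\medskip

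\noindent\textbf{Main obstacle.} Case~A is routine, but Case~B requires bookkeeping. Specifically, the identification of the coend relation across $\Sigma$ with a cyclic-trace identity, and the verification that the various conjugations and pairings (those used to define $T_c$, and those used to identify $A(\trev M;\ol x)\cong A(M;x)^{*}$) line up correctly, is the technical heart of the argument. The cleanest way to handle this is to recognize the gluing formula not as an \emph{ad hoc} sum but as the trace of an endomorphism built from $Z(W)$; once this is done, invariance under the coend relations of Appendix~\ref{s-nep} follows functorially from the cyclicity of trace, and the lemma then combines Cases~A and~B to give the result.
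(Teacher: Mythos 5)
Your proposal is correct and follows essentially the same route as the paper: the relations are split into those supported away from the gluing locus (immediate) and those that move data across it, and the latter case is handled by observing that the gluing formula is a trace, so that transporting a collar piece $e$ (your $f\in A(\Sigma\times I)$) across the cut amounts to $\tr(AB)=\tr(BA)$. The paper writes this cyclicity out explicitly as the symmetry in $i\leftrightarrow j$ of a double sum obtained by regluing $W_{gl}$ as $W\cup(M\times I)$ and using the resolution of the identity in an orthogonal basis of $A(M;b)$, which is exactly the coordinate form of your operator identity.
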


\begin{proof}
$A(\bd W_{gl})$ is a quotient of string diagrams modulo local relations.
Those relations are generated by (1) local relations supported away from the cut locus, and (2) an isotopy (non-local) which
shifts the cut locus.
It's clear that the lemma holds if $c_{gl}$ and $c'_{gl}$ are related by a local relation supported away from the cut locus,
so all that remains is to show that
\[
	Z(W_{gl})(\gl(c\bullet e)) = Z(W_{gl})(\gl(e\bullet c))
\]
for all $c$ and $e$ (see Figure \ref{a5}).
\kfigb{a5}{a5}{Shift isotopy invariance.}{5.5in}

An alternate way of obtaining $W_{gl}$ is to glue $W$ to $M\times I$ (Figure \ref{a6}).
\kfigb{a6}{a6}{More gluing.}{4.5in}
Applying the gluing relation to this decomposition, we obtain
\[
	Z(W_{gl})(\gl(c, e)) = \sum_{i,j} \frac{ Z(W)(c\cup \ol{f_i}\cup f_j) \cdot Z(M\times I)(e\cup f_i\cup \ol{f_j}) }
				{ \ipp{f_i} \ipp{f_j} }. \quad\quad\quad\quad(*)
\]
Using the facts that
\[
	Z(M\times I)(e\cup f_i\cup \ol{f_j}) = \ip{\ol{f_i}}{e\cup \ol{f_j}}
\]
and
\[
	e\cup \ol{f_j} = \sum_{i} \frac{ \ip{\ol{f_i}}{e\cup \ol{f_j}} }{ \ipp{\ol{f_i}} } \ol{f_i}
			= \sum_{i} \frac{ \ip{\ol{f_i}}{e\cup \ol{f_j}} }{ \ipp{f_i} }  \ol{f_i}
\]
and the linearity of $Z(X)(\cdot)$,
we see that (*) above is equal to
\[
	\sum_{j} \frac{ Z(W)(c\bullet e\cup \ol{f_j}\cup f_j)}{ \ipp{f_j} } ,
\]
which is what the gluing relation spits out for $Z(W_{gl})(\gl(c\bullet e))$.
Interchanging the roles of $i$ and $j$, we see that (*) is also equal to 
the gluing formula for $Z(W_{gl})(\gl(e\bullet c))$.
\end{proof}

We can now define a pairing for $A(S^1\times B^{n-1}; c)$ for all boundary conditions $c$.
\eqar{
	A(S^1\times B^{n-1}; c) \ot A(\ol{S^1\times B^{n-1}}; \ol c) & \to  & \k \\
	x \ot \ol y & \mapsto & Z(\hb{S^1\times B^n})(x \cup \ol y)
}
Here $\hb{S^1\times B^n}$ denotes the standard handlebody structure on $S^1\times B^n$, consisting of a 0-handle
and a 1-handle.
We must show that the above pairing is non-degenerate.

First we recall a standard skein theory result.
Let $M = M_1 \cup_Y M_2$ be an $n$-manifold decomposed into two pieces along $Y$.
Let $c$ be a boundary condition on $M$ and let $c'$ be the restriction of $c$ to $\bd Y$.
Assume that the 1-category $A(Y; c')$ is semisimple, and that $\{e_i : x_i\to x_i\}$ 
is a complete set of minimal idempotents for $A(Y; c')$.
Let $c_j$ denote the restriction of $c$ to $\bd M_j\cap \bd M$.
Then for each $i$ and $j$ the idempotent $e_i$ determines a subspace $A(M_j; c_j \cup e_i)$ of $A(M_j; c_j \cup x_i)$.
(Strictly speaking we should write $\ol{x_i}$ and $\ol{e_i}$ when $j=2$.
And even more strictly speaking, we should distinguish between different directions of bar-ing
for higher codimension manifolds.)
Then (this is the standard result)
\[
	%A(M_1 \cup_Y M_2; c) \cong \bigoplus_i A(M_1; c_1 \cup e_i) \ot A(M_2; c_2 \cup \ol{e_i}) .
	A(M_1 \cup_Y M_2; c) \cong \bigoplus_i A(M_1; c_1 \cup e_i) \ot A(M_2; c_2 \cup e_i) .
\]
(In the super case, we would need to tensor over the endomorphism algebra of $e_i$ rather than over scalars.)

Next, we compute how the TQFT pairings interact with the above decomposition.
\begin{lem} \label{ipscalelemma}
With notation as above, let $a_j\in A(M_j; c_j \cup e_i)$ and $b_j \in A(\ol{M_j}; c_j \cup e_i)$.
Then
\[
	\ip{a_1\ot a_2}{b_1\ot b_2}_{A(M)} = \frac{ \ip{a_1}{b_1}_{A(M_1)} \ip{a_2}{b_2}_{A(M_2)} }{ \ip{e_i}{e_i}_{A(Y\times I)} }
\]
Let $b'_j \in A(\ol{M_j}; c_j \cup e_m)$, with $m \ne i$.  Then
\[
	\ip{a_1\ot a_2}{b'_1\ot b'_2}_{A(M)} = 0 .
\]
In other words, the above decomposition of $A(M; c)$ is orthogonal with respect to the TQFT pairings, and there are some
$\ip{e_i}{e_i}_{A(Y\times I)}$ scaling factors involved in relating the glued and unglued pairings.
\end{lem}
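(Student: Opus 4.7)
The plan is to apply the gluing axiom of Subsection~\ref{ss-pi2ss} to the cylinder $M\times I$, cut along the internal copy of $Y\times I$.

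By definition of the TQFT inner product,
\[
\ip{a_1\ot a_2}{b_1\ot b_2}_{A(M)} \;=\; Z(M\times I)\bigl(\ol{a_1\cup a_2}\cup(b_1\cup b_2)\bigr),
\]
where $\ol{a_1\cup a_2}$ sits on $M\times\{0\}$ and $b_1\cup b_2$ on $M\times\{1\}$.  Since $M\times I=(M_1\times I)\cup_{Y\times I}(M_2\times I)$, the gluing formula rewrites this as
\[
\sum_{f}\frac{Z(M_1\times I)(\ol{a_1}\cup b_1\cup f)\cdot Z(M_2\times I)(\ol{a_2}\cup b_2\cup\ol f)}{\ipp{f}},
\]
indexed by an orthogonal basis $\{f\}$ of the skein space on $Y\times I$ whose boundary conditions match the restrictions of the inputs to $Y$ at each height, together with $c'$ on $\bd Y\times I$.

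Now identify this skein space with a morphism space in the semisimple $1$-category $A(Y;c')$: by definition, $A(Y\times I;\ol x\cup y)=\Hom_{A(Y;c')}(x,y)$.  The restrictions of $\ol{a_1\cup a_2}$ and of $b_1\cup b_2$ (respectively $b'_1\cup b'_2$) to $Y$ are the objects $x_i$ and $x_m$, and the idempotents $e_i$, $e_m$ built into the defining subspaces $A(M_j;c_j\cup e_i)$ and $A(\ol{M_j};c_j\cup e_m)$ force the surviving contribution to come from the subspace of morphisms $\alpha:x_i\to x_m$ satisfying $e_m\circ\alpha\circ e_i=\alpha$.

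For the orthogonality statement ($m\ne i$), distinct minimal idempotents in a semisimple category cut out non-isomorphic simple summands, so the projected subspace is $0$ by Schur's lemma and the gluing sum is empty, yielding $\ip{a_1\ot a_2}{b'_1\ot b'_2}_{A(M)}=0$.  For the matched case ($m=i$), the projected subspace is one-dimensional in the $\k=\c$, Vec-enriched setting of the lemma (the parenthetical remark of the lemma covers the super case), spanned by $e_i$, so the sum collapses to the single term $f=e_i$.  Because $e_i$ acts as the identity on elements of $A(M_j;c_j\cup e_i)$, the two numerator factors reduce to $\ip{a_1}{b_1}_{A(M_1)}$ and $\ip{a_2}{b_2}_{A(M_2)}$, while the denominator is $\ipp{e_i}_{A(Y\times I)}$, giving the stated formula.

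The main technical obstacle is the bookkeeping around the idempotent projections: one must choose an orthogonal basis of $A(Y\times I;\ol{x_i}\cup x_m)$ so that only $e_i$ (or nothing, when $m\ne i$) survives the two-sided projection, and confirm that exactly one factor of $\ipp{e_i}$ appears in the denominator rather than two.  The orthogonality half is nearly automatic once the identification with morphism spaces in $A(Y;c')$ is in place; the matched case is where the precise coefficient requires careful tracking.
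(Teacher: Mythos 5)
Your argument is correct and is essentially the paper's own proof written out in full: the paper simply declares that the lemma ``follows immediately from the path integral gluing axiom'' applied to $M\times I=(M_1\times I)\cup_{Y\times I}(M_2\times I)$, which is exactly the computation you carry out, including the collapse of the gluing sum onto the single basis element $e_i$ and the resulting lone factor of $\ipp{e_i}$ in the denominator. The one point the paper's proof emphasizes that you omit is a logical caveat rather than a mathematical step: at this stage of the induction the pairings are only candidate values defined via particular handlebody structures, so one must stipulate that these candidates satisfy the gluing axiom for this particular gluing (a stipulation verified in the situations where the lemma is later applied).
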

\begin{proof}
Since the pairings are defined in terms of path integrals, 
and we have not yet shown that well-defined path integrals exist, we need to
clarify the statement of the lemma.
The pairings make use of candidate values of $Z(M\times I)$, $Z(M_1\times I)$, $Z(M_2\times I)$, 
and $Z(Y\times I \times I)$, 
which are based on particular choices of handlebody structure on these manifolds,
and we assume that these candidate values satisfy the path integral gluing axiom for gluing $M_1\times I$ to 
$M_2\times I$ along $Y\times I$ to
obtain $M\times I$.
With this stipulation in place, the lemma follows immediately from the path integral gluing axiom.
(When we apply the lemma, $M_1$ will be a 0-handle and $M_2$ with be a $k$-handle, and the 
stipulation will be satisfied.)
\end{proof}

We can now show that the pairings for $A(S^1\times B^{n-1}; c)$ (for all $c$) are non-degenerate.
We have assumed that the 1-category $A(S^0\times B^{n-1}; c')$ is semisimple for all $c'$.
The above lemma now shows that the pairing for $A(S^1\times B^{n-1}; c)$ is an orthogonal sum of pairings, and each summand pairing
is a product of pairings for $A(B^n; d)$, which are assumed to be non-degenerate.
(The non-zero-ness of the scaling factors $\ip{e_i}{e_i}_{A(Y\times I)}$ from the lemma also follows from the 
non-degenerateness of the $B^n$ pairings.)

Proceeding inductively, we can show that the pairings on $A(S^k\times B^{n-k}; c)$ are non-degenerate for all $k$ and all $c$.
Assume this has been done for $0, \ldots, k-1$.
The standard handle decomposition of $S^k\times B^{n+1-k}$ attaches a $k$-handle to a 0-handle along $S^{k-1}\times B^{n-k+1}$.
The non-degeneracy of the pairings for $A(S^{k-1}\times B^{n-k+1}; d)$ allows us to compute $Z(S^k\times B^{n+1-k})$ in terms of
this handle decomposition.
We use $Z(S^k\times B^{n+1-k})$ to define a pairing on $A(S^k\times B^{n-k}; c)$.
We must show that this pairing on $A(S^k\times B^{n-k}; c)$ is non-degenerate.
We cut $S^k\times B^{n-k}$ into two copies of $B^n$ along $S^{k-1}\times B^{n-k}$.
The 1-categories $A(S^{k-1}\times B^{n-k}; c')$ are (by assumption) semisimple for all $c'$, so we can apply Lemma \ref{ipscalelemma}
to conclude that the pairings for $A(S^k\times B^{n-k})$ are non-degenerate (because the pairings for $A(B^n)$ 
and $A(S^{k-1}\times B^{n-k+1})$ are).

\medskip

This concludes step 1 of the proof.
Armed with the pairings for $A(S^k\times B^{n-k}; c)$ (for all $k$ and $c$) constructed above, we can compute the path integral
for any $n{+}1$-manifold equipped with an ordered handle decomposition.
(``Ordered" means that the handles are are attached sequentially in a specified order.
This order is not required to place lower-index handles before higher-index handles.)

\medskip

Step 2 of the proof is to show that these computations are independent of the choice of handle decomposition and
depend only on the underlying manifold.
Any two handle decompositions of a manifold are related by series of the following three ``moves":
\begin{itemize}
\item Swapping the order of a pair of distant handles which are adjacent in the order.
\item Handle slides (changing the attaching map of a handle by an isotopy).
\item Cancelling a $k$-handle and $k{+}1$-handle.
\end{itemize}

Invariance under distant order changes is obvious.
Invariance under handle slides is also obvious. 
(The theories are topologically invariant, so changing
a handle attaching map by an isotopy does not make any difference.)
All that remains for step 2 is to show that the computation of the path integral is invariant under handle cancellation.

Handle cancellation invariance will follow from the following associativity-of-gluing property 
of the path integral gluing formula.
To simplify notation we will ignore boundary conditions on $\bd W$.
%\nn{need to check for additional assumptions}
\begin{lem}  \label{assoc-gluing}
Let $W = W_1 \cup W_2 \cup W_3$, with pairwise intersections $M_{ij} = W_i\cap W_j$, and common intersection an $n{-}1$-manifold $Y$.
See Figure \ref{b1}.
Assume that the 1-category $A(Y)$ is semisimple.
Then applying the gluing relation to first compute $W_1 \cup W_2$, and then to compute 
$(W_1 \cup W_2) \cup W_3$, yields the same answer as applying the 
gluing relation to $W_1 \cup W_3$, and then to $(W_1 \cup W_3) \cup W_2$.
In other wards, the path integral gluing formula is associative.
\end{lem}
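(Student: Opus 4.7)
The plan is to expand both iterated gluings into a single symmetric triple sum, indexed by a minimal idempotent $e_i\in A(Y)$ together with orthogonal basis vectors on the three gluing manifolds $M_{12},M_{13},M_{23}$, and observe that the two orderings produce the same expression. The key mechanism is that semisimplicity of $A(Y)$ resolves the codimension-one intersection $Y$ into a direct sum of idempotent-labeled summands.

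For $(W_1\cup W_2)\cup W_3$, I would first apply the gluing formula along $M_{13}\cup_Y M_{23}$, writing $Z(W)=\sum_\beta Z(W_1\cup W_2)(\cdots b_\beta)\,Z(W_3)(\cdots \ol{b_\beta})/\ipp{b_\beta}$ for an orthogonal basis $\{b_\beta\}$ of $A(M_{13}\cup_Y M_{23})$.  Because $A(Y)$ is semisimple, the standard skein decomposition recalled just before Lemma \ref{ipscalelemma} gives
\[
A(M_{13}\cup_Y M_{23})\;\cong\;\bigoplus_i A(M_{13};e_i)\ot A(M_{23};e_i),
\]
with $\{e_i\}$ the minimal idempotents of $A(Y)$ compatible with the ambient boundary conditions on $\bd Y$.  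I would take the $b_\beta$ of the form $f_\gamma\ot g_\delta$ for orthogonal bases of the summands; Lemma \ref{ipscalelemma} then yields $\ipp{b_\beta}=\ipp{f_\gamma}\ipp{g_\delta}/\ipp{e_i}$.  Next, expanding $Z(W_1\cup W_2)(\cdots f_\gamma\cup g_\delta)$ via the inner gluing along $M_{12}$ introduces an orthogonal basis $\{a_\alpha\}$ of $A(M_{12})$; since $f_\gamma$ and $g_\delta$ each restrict to $e_i$ on $Y$, the only contributing basis vectors lie in the summand $A(M_{12};e_i)$.

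After substitution and cancellation of the $\ipp{e_i}$ factor produced by Lemma \ref{ipscalelemma} against the matching denominator, the iterated gluing formula collapses to the triple sum
\[
Z(W)\;=\;\sum_i\,\sum_{\alpha,\gamma,\delta}\;\frac{\ipp{e_i}\cdot Z(W_1)(a_\alpha,f_\gamma)\,Z(W_2)(\ol{a_\alpha},g_\delta)\,Z(W_3)(\ol{f_\gamma},\ol{g_\delta})}{\ipp{a_\alpha}\,\ipp{f_\gamma}\,\ipp{g_\delta}},
\]
where boundary conditions on $\bd W$ are suppressed and the inner sums range over orthogonal bases of $A(M_{12};e_i)$, $A(M_{13};e_i)$, $A(M_{23};e_i)$.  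This expression is manifestly symmetric under simultaneous relabeling of $(W_1,W_2,W_3)$ with $(M_{23},M_{13},M_{12})$, so carrying out the analogous expansion for $(W_1\cup W_3)\cup W_2$ --- whose outer gluing along $M_{12}\cup_Y M_{23}$ decomposes into the same pairs of summands and whose inner gluing along $M_{13}$ contributes the $\ipp{f_\gamma}$ denominator --- produces the identical triple sum.

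The main obstacle will be careful bookkeeping: tracking the boundary conditions along the $Y$-portion of each $\bd M_{ij}$ through two successive applications of the gluing formula, verifying the hypothesis of Lemma \ref{ipscalelemma} (that the candidate path integrals used to define the TQFT pairings on $M_{12}\times I$, $M_{13}\times I$, $M_{23}\times I$ and $Y\times I\times I$ satisfy the gluing axiom in the relevant configurations), and correctly marshaling the $\ipp{e_i}$ scaling factors so that the comparison is visible.  The semisimplicity of $A(Y)$ is essential: without it one cannot split the orthogonal bases of $A(M_{13}\cup_Y M_{23})$ and $A(M_{12}\cup_Y M_{23})$ along a common set of idempotents, and the two expansion orders would not obviously coincide.
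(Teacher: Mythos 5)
Your proposal is correct and follows essentially the same route as the paper's proof: decompose along the minimal idempotents of $A(Y)$, use Lemma \ref{ipscalelemma} to split the inner products on $A(M_{13}\cup_Y M_{23})$, and collapse the iterated gluing into a triple sum (with the $\ipp{e_i}$ factor in the numerator) that is manifestly symmetric in $1,2,3$. The paper arrives at the identical expression, and also flags the same footnote you anticipate about verifying Lemma \ref{ipscalelemma} for the particular handlebody structures used to define the pairings.
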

\kfigb{b1}{b1}{Three manfolds glued together.}{4in}
\begin{proof}
Let $\{f_\alpha : x_\alpha\to x_\alpha\}$, with $\alpha\in J$, be a set of minimal idempotents for the semisimple 1-category $A(Y)$.
%Let $x_\alpha$ be the underlying object of $f_\alpha$.
For each $\alpha$, $i$ and $j$, let $\{e_{\alpha\beta}\}$, 
with $\beta\in \Lambda_{ij\alpha}$, be an orthogonal basis of $A(M_{ij}; x_\alpha)$.
Then, by Lemma \ref{ipscalelemma}, $\{e_{\alpha\beta} \cup e_{\alpha\gamma}\}$,
with $\alpha\in J$, $\beta\in \Lambda_{13\alpha}$ and $\gamma\in \Lambda_{23\alpha}$,
is an orthogonal basis of $M_{13}\cup_Y M_{23}$, and similarly for permutations of 1,2,3.
See Figure \ref{b2}. 
\kfigb{b2}{b2}{Three manifolds cut apart.}{3.5in}
(To simplify notation, I'm omitting some bars (e.g.\ $\trev{e_{\alpha\beta}}$).)

Applying the gluing formula to compute $Z(W_1\cup W_2)$, we have
\[
	Z(W_1\cup W_2)(e_{\alpha\beta} \cup e_{\alpha\gamma}) = 
			\sum_{\delta\in \Lambda_{12\alpha}} 
				\frac{ Z(W_1)(e_{\alpha\beta} \cup e_{\alpha\delta})\cdot Z(W_2)(e_{\alpha\delta} \cup e_{\alpha\gamma}) }
						{ \langle e_{\alpha\delta}, e_{\alpha\delta} \rangle } .
\]
Applying the gluing formula again to compute $Z((W_1\cup W_2)\cup W_3)$, and then applying Lemma \ref{ipscalelemma}, we have
\begin{align*}
Z((W_1\cup W_2)\cup W_3) &=& \sum_{\alpha, \beta, \delta} 
		\frac{ Z(W_1)(e_{\alpha\beta} \cup e_{\alpha\delta})\cdot Z(W_2)(e_{\alpha\delta} \cup e_{\alpha\gamma})
									\cdot Z(W_3)(e_{\alpha\beta} \cup e_{\alpha\gamma})}
						{ \langle e_{\alpha\delta}, e_{\alpha\delta} \rangle 
						\langle e_{\alpha\beta} \cup e_{\alpha\gamma}, e_{\alpha\beta} \cup e_{\alpha\gamma} \rangle } \\
						&=& \sum_{\alpha, \beta, \delta} 
				\frac{ Z(W_1)(e_{\alpha\beta} \cup e_{\alpha\delta})\cdot Z(W_2)(e_{\alpha\delta} \cup e_{\alpha\gamma}) 
									\cdot Z(W_3)(e_{\alpha\beta} \cup e_{\alpha\gamma}) 
											\cdot \langle f_\alpha, f_\alpha \rangle}
						{ \langle e_{\alpha\delta}, e_{\alpha\delta} \rangle 
						\langle e_{\alpha\beta}, e_{\alpha\beta} \rangle
						\langle  e_{\alpha\gamma}, e_{\alpha\gamma}\rangle
						 }
\end{align*}
Note that the above expression is symmetric in permutations of 1,2,3.
It follow that applying the gluing formula to compute $Z((W_1\cup W_3)\cup W_2)$ (or $Z((W_2\cup W_3)\cup W_1)$) yields the same answer.
This completes the proof of the lemma, except for a footnote.

That footnote being: 
We want to apply the lemma in cases where the inner products on $A(M_{ij})$ and
$A(M_{ij}\cup M_{jk})$ (i.e.\  $Z(M_{ij}\times I)$ and $Z((M_{ij}\cup M_{jk}) \times I)$
are initially defined in terms of particular 
choices of handlebody structures on $M_{ij}$ and
$M_{ij}\cup M_{jk}$, and we must be careful to verify that Lemma \ref{ipscalelemma} holds for these choices.
\end{proof}

We will apply the lemma with $W_1$ the initial manifold, $W_2$ a $k$-handle, and $W_3$ a cancelling
$k{+}1$-handle.
See Figure \ref{a7}.
\kfigb{a7}{a7}{Cancelling $k$- and $k{+}1$-handles.}{4in}

Then $M_{12}$ is $S^{k-1}\times B^{n-k+1}$ (the attaching region of the $k$-handle),
$M_{13}$ is $B^k\times B^{n-k}$ (half of the attaching region of the $k{+}1$-handle),
and $M_{23}$ is also $B^k\times B^{n-k}$ (the other half of the attaching region of the $k{+}1$-handle).
For $M_{12}\times I$ we choose the standard handle structure with one 0-handle and one $k{-}1$-handle.
For $M_{13}\times I$ and $M_{23}\times I$ we choose handle structures consisting of a single 0-handle.

$M_{12}\cup M_{23}$ is $B^n$, and for $(M_{12}\cup M_{23})\times I$ we consider two
handlebody structures: a single 0-handle, and a 0-handle plus a $k{-}1$-handle plus a cancelling $k$-handle.
Our inductive assumptions allow us to assume that these two handlebody structures yield the same result for 
$Z((M_{12}\cup M_{23})\times I)$.
(This is a key point.)
It follows that Lemma \ref{ipscalelemma} holds for $M_{12}\cup M_{23})$.

$M_{13}\cup M_{23}$ is $S^k\times B^{n-k}$ (the attaching region of the $k{+}1$-handle).
We choose the standard handlebody structure on $(M_{13}\cup M_{23})\times I$ (one 0-handle and one $k$-handle).
With these choices Lemma \ref{ipscalelemma} holds for $M_{13}\cup M_{23})$.

We can now apply Lemma \ref{assoc-gluing} to conclude that the computation of $Z((W_1\cup W_2)\cup W_3)$ agrees 
with the computation of $Z((W_1\cup W_3)\cup W_2)$.
In other words, we can attach the $k$- and $k{+}1$-handles in either order.
Our goal is to show that attaching the $k$-handle first and then the cancelling $k{+}1$-handle
is the same as doing nothing (i.e. path integral of $W_1$ and $(W_1\cup W_2)\cup W_3$ are the same).
Attaching the $k{+}1$-handle before the $k$-handle is equivalent to adding a boundary collar along $B^n$ (i.e.\ $M_{13}$).
Likewise, attaching the $k$-handle to the union of the original manifold and the $k{+}1$-handle is again equivalent to attaching
a boundary collar along $B^n$ (i.e.\ $M_{12}\cup M_{23}$).
It is easy to see that attaching boundary collars has no effect on the path integral.
It follows that the computation of a path integral by applying the gluing formula to a handlebody
structure is invariant under handle cancellation.

We have now shown that defining $Z(W)$ in terms of a choice of handlebody structure on $W$
is independent of the choice of handlebody structure.
Thus we have a well-defined path integral $Z(W) \in Z(\bd W)$ for every $n{+}1$-manifold $W$.
This completes step two of the proof of Theorem \ref{pithm}.

\medskip

The final step in the proof of Theorem \ref{pithm} is to show that the path integral $Z(W):A(\bd W)\to \k$ that we have just defined
does in fact satisfy the gluing formula, for any gluing of $n{+}1$-manifolds.

It suffices to show that the gluing formula holds for manifolds of the form $M = M' \cup h$, 
where $M'$ is an $n$-manifold for which we have already verified the gluing formula 
(i.e.\ verified the gluing formula for any gluing along $M'$), and $h$ is an $n$-dimensional $k$-handle.
(The induction starts with $M'$ empty and $h$ a 0-handle.)

Let $W$ be an $n{+}1$-manifold as in the statement if the gluing formula.
Let $W_{\gl}$ be the result of gluing $W$ to itself along $M$.
Let $W'_{\gl}$ be the result of gluing $W$ to itself along $M'$.
Let $H$ be a thickened version of $h$ -- an $n{+}1$-dimensional $k{+}1$-handle.
The boundary of $H$ can be divided into three pieces: $B^k\times B^{n-k}$ (``upper" attaching region),
another copy of $B^k\times B^{n-k}$ (``lower" attaching region),
and $B^{k+1}\times S^{n-k-1}$ (the non-attaching region).
See Figure \ref{a9}.
\kfigb{a8}{a8}{Another figure.}{3.2in}
\kfigb{a9}{a9}{And another.}{3.2in}

As usual, we will suppress from the notation boundary conditions on $\bd W_{\gl}$.

Note that $W_{\gl} \cong W'_{\gl} \cup H$.

As in the proof of Lemma \ref{assoc-gluing}, let $\{f_\alpha\}$ be a set of minimal idempotents of  $S^{k-1}\times B^{n-k+1}$
(the intersections of the upper and lower attaching regions), and
let $\{e_{\alpha\beta}\}$ and $\{e_{\alpha\gamma}\}$ be bases for the upper and lower attaching regions.
Since the upper and lower attaching regions are canonically isomorphic (or rather, bar-isomorphic), 
we can choose the ``same" basis for
each manifold and there is a natural bijection between these sets.
It follows that $\{e_{\alpha\beta}\cup \trev{e_{\alpha\gamma}}\}$ is a basis of the union of the upper and lower attaching regions,
with $\beta$ and $\gamma$ running through the same indexing set.

Since $Z(W_{\gl})$ can be computed with any handle decomposition, we can choose a handle decomposition which attaches
the $k{+}1$-handle $H$ last (see Figure \ref{a8}), 
and we have
\[
	Z(W_{\gl}) = \sum_{\alpha,\beta,\gamma} 
		\frac{Z(W'_{\gl})(\trev{e_{\alpha\beta}}\cup e_{\alpha\gamma}) \cdot Z(H)(e_{\alpha\beta}\cup \trev{e_{\alpha\gamma}})}
			{\langle e_{\alpha\beta}\cup \trev{e_{\alpha\gamma}}, e_{\alpha\beta}\cup \trev{e_{\alpha\gamma}} \rangle} .
\]
By our inductive hypotheses we have
\[
	Z(W'_{\gl})(\trev{e_{\alpha\beta}}\cup e_{\alpha\gamma}) = \sum_{\alpha,\delta} 
		\frac{Z(W)(e_{\alpha\delta}\cup \trev{e_{\alpha\delta}}\cup \trev{e_{\alpha\beta}}\cup e_{\alpha\gamma})}
			{\langle e_{\alpha\delta}, e_{\alpha\delta} \rangle} ,
\]
where $\{e_{\alpha\delta}\}$ is a basis of $A(M')$ (with boundary condition corresponding to $\alpha$).
By Lemma \ref{ipscalelemma}, we have
\[
	\langle e_{\alpha\beta}\cup \trev{e_{\alpha\gamma}}, e_{\alpha\beta}\cup \trev{e_{\alpha\gamma}} \rangle
		= \frac{\langle e_{\alpha\beta}, e_{\alpha\beta} \rangle \langle \trev{e_{\alpha\gamma}}, \trev{e_{\alpha\gamma}} \rangle}
			{\langle f_\alpha, f_\alpha \rangle}
\]
and
\[
	\langle e_{\alpha\delta}\cup e_{\alpha\gamma}, e_{\alpha\delta}\cup e_{\alpha\gamma} \rangle
		= \frac{\langle e_{\alpha\delta}, e_{\alpha\delta} \rangle \langle e_{\alpha\gamma}, e_{\alpha\gamma} \rangle}
			{\langle f_\alpha, f_\alpha \rangle}.
\]
(The last LHS is an inner product for $A(M) = A(M'\cup h)$.)
We also have
\[
	Z(H)(e_{\alpha\beta}\cup \trev{e_{\alpha\gamma}}) = \delta_{\beta\gamma} \langle e_{\alpha\beta}, e_{\alpha\beta} \rangle 
\]
(because $H$ is isomorphic to the product $n{+}1$-manifold used to define $\langle e_{\alpha\beta}, e_{\alpha\beta} \rangle$
and we have chosen orthogonal bases).

Combining all of the above, we have
\begin{align*}
	Z(W_{\gl}) &= \sum_{\alpha,\beta,\gamma,\delta}
		\frac{ Z(W)(e_{\alpha\delta}\cup \trev{e_{\alpha\delta}}\cup \trev{e_{\alpha\beta}}\cup e_{\alpha\gamma})   \delta_{\beta\gamma} \langle e_{\alpha\beta}, e_{\alpha\beta} \rangle  \langle f_\alpha, f_\alpha \rangle  }
			 { \langle e_{\alpha\delta}, e_{\alpha\delta} \rangle \langle e_{\alpha\beta}, e_{\alpha\beta} \rangle \langle \trev{e_{\alpha\gamma}}, \trev{e_{\alpha\gamma}} \rangle  } \\
&= \sum_{\alpha,\gamma,\delta}
		\frac{ Z(W)(e_{\alpha\delta}\cup \trev{e_{\alpha\delta}}\cup \trev{e_{\alpha\gamma}}\cup e_{\alpha\gamma})  \langle f_\alpha, f_\alpha \rangle  }
			 { \langle e_{\alpha\delta}, e_{\alpha\delta} \rangle \langle e_{\alpha\gamma}, e_{\alpha\gamma} \rangle  } \\
&= \sum_{\alpha,\gamma,\delta}
		\frac{ Z(W)(e_{\alpha\delta}\cup \trev{e_{\alpha\delta}}\cup \trev{e_{\alpha\gamma}}\cup e_{\alpha\gamma})   }
			 {\langle e_{\alpha\delta}\cup e_{\alpha\gamma}, e_{\alpha\delta}\cup e_{\alpha\gamma} \rangle }
\end{align*}
This is exactly the statement of the gluing formula for gluing $W$ along $M$, so we are done.

\medskip
A final remark:
The assumption that $C$ is weakly complete is not needed for the above path integral theorem.
The weakly complete assumption is only needed to write the state sum formula in a more compact form.

\noop{
\subsection{Temp notes}

\begin{itemize}
\item maybe remark that weakly complete (minimal morphism) assumptions is needed for simple ss formula, but
not needed for PIthm proof
\item super case: non-identity endomorphisms of minimal idems
\item do super case afterwards? (diffs); or use footnotes?
\item no, put Spin and Pin cases in a separate paper
\end{itemize}
}

\appendix

\section{Constructing the TQFT in dimensions $n$ through $0$}
\label{s-nep}

This appendix gives a terse account of how one constructs a fully extended $n{+}\epsilon$-dimensional
$H$-TQFT from an $\k$-linear $H$-pivotal $n$-category $C$.
The pivotality assumption is important here but $C$ need not satisfy any finiteness or semisimplicity
conditions.

In contrast to the $n{+}1$-dimensional path integral construction above, 
the constructions in this section are ``easy" in the sense that
there is no need to choose a combinatorial description of the manifolds and verify independence of that choice.

For more details, see \cite{W2006} and \cite{MWblob}.

Note that only the $n$- and $n{-}1$-dimensional parts of the TQFT are used in the rest of the paper.

\medskip

For $X$ a $k$-manifold, $0\le k \le n$, and $c$ a string diagram on $\bd X$, define $C(X; c)$ to be the set of all string
diagrams on $X$ which restrict to $c$ on $\bd X$.

Let $M$ be an $n$-manifold and let $c$ be a string diagram on $\bd M$ (i.e. $c\in C(\bd M)$).
Define $\k[C(M; c)]$ to be finite $\k$-linear combinations of such string diagrams on $M$.

Let $B$ be an $n$-ball (isomorphic to the standard $n$-ball $B^n$, but not necessarily canonically so).
Let $c$ be a string diagram on $\bd B$.
There is an evaluation map from $\k[C(B; c)]$ to a $\k$-vector space of $n$-morphisms of $C$.
(The domain/range of this space is determined by $c$.)
Let $U(B; c)\sub \k[C(B; c)]$ be the kernel of this evaluation map.

Now let $B\sub M$ be an $n$-ball contained in $M$.
For compatible string diagrams $c\in C(\bd M)$ and $d\in C(\bd B)$ and $e\in C(\bd (M \setmin B))$, there is a gluing map
\[
	\k[C(M\setmin B; e)] \ot U(B; d) \to \k[C(M; c)].
\]
Define $U(M; c)$ to be the span of the images of the above gluing maps, for all $B$, $d$, and $e$.
Finally, define
\[
	A(M; c) \deq \k[C(M; c)] / U(M; c) .
\]
We can think of $A(M; c)$ as finite linear combinations of string diagrams on $M$,
modulo the obvious local relations.
In other words, the $C$-skein module of $M$ (with boundary condition $c$).

We can also think of $A(M; c)$ is the pre-dual Hilbert space of the TQFT associated to $C$.
The Hilbert space is defined to be $Z(M;c) \deq A(M; c)^*$, functions on string diagrams with evaluate to zero on $U(M; c)$.
The path integral $Z(W^{n+1})$ constructed above is an element of $Z(\bd W)$.

\medskip

Now let $Y$ be an $n{-}1$-manifold and $c\in C(\bd Y)$.
We define a linear 1-category $A(Y;c)$ as follows.
The objects of $A(Y;c)$ are defined to be the string diagrams $C(Y;c)$.
(Note that we do not mod out by any relations.)
The morphisms from $x$ to $y$ are the vector space $A(Y\times I; \trev x \cup y)$.
(Note that we are using the ``pinched" boundary convention here, so that the entire boundary of 
$Y\times I$ is $\trev Y \cup Y$.)
Composition of morphisms is given by stacking/gluing.
%\nn{fig for all of this?}

We define $Z(Y;c)$ to be the representation category of $A(Y;c)$, 
i.e. functors from $A(Y;c)$ to Vec (the linear and additive completion trivial category).

Let $M$ be an $n$-manifold ($n$-dimensional $H$-manifold).
It is easy to see that the collection of vector spaces
$\{A(M;c)\}$, indexed by $c\in C(\bd M)$, affords a representation of $A(\bd M)$.
The action is given by gluing boundary collars onto $M$.
We will denote this representation by $A(M)$.

Let $M = M_1\cup_Y M_2$.
For simplicity assume that $Y$ is the entire boundary of both $M_1$ and $M_2$.
It is not hard to prove that 
\[
	A(M) \cong A(M_1)\ot_{A(Y)} A(M_2)
\]
(see \cite{W2006}).
More generally, there is a similar gluing theorem for self-gluings along non-closed $n{-}1$-manifolds $Y$.

\medskip

The $n$- and $n{-}1$-dimensional parts of the TQFT sketched above are all that is needed in this paper.
But it's not difficult to extend the above constructions all the way down to 0-manifolds.

Let $X$ be an $n{-}k$-manifold and $c\in C(\bd X)$.
We want to define a linear $k$-category $A(X; c)$.
For notational simplicity, I'll suppress some boundary conditions from the notation.
We define the $j$-morphisms of $A(X; c)$ (with $0\le j < k$) to be
the set of string diagrams $C(X\times B; \cdot)$, where $B$ is a $j$-ball.
We define the $k$-morphisms to be the vector space $A(X\times B; \cdot)$, where $B$ is a $k$-ball.

Since we know how to restrict string diagrams to boundaries and how to glue string diagrams together,
there are various domain/range and composition relationships among the above morphisms sets.
Whether it is now easy to show that we have constructed a linear $H$-pivotal $k$-category depends on the definition
of $k$-category one is using.
If one uses the disklike $n$-category definition of \cite{MWblob}, which is designed around exactly this example,
then showing that we have a $k$-category is easy.

%\nn{what to say about gluing?}

\noop{  %%%%%%%%%%%%%%%

\section{blah}
\label{xxxxsect}

\begin{itemize}
\item 
\end{itemize}

\bigskip \nn{...}

}       %%%%%%%%%%%%%%%

%\bibliographystyle{plain}
%\bibliography{bib}

\renewcommand*{\bibfont}{\small}
\setlength{\bibitemsep}{0pt}
\raggedright
\printbibliography

\end{document}